\documentclass[12pt, letterpaper]{amsart}

\usepackage{amssymb}
\usepackage{amsthm}
\usepackage{amsxtra}
\usepackage{graphicx}
\usepackage{listings}
\usepackage{color}
\usepackage{MnSymbol}
\usepackage{enumitem}
\usepackage{booktabs}

\usepackage[utf8]{inputenc}
\usepackage[T1,T2A]{fontenc}

\usepackage[numbers]{natbib}

\newcommand{\cR}{\mathbb{R}}
\newcommand{\la}{\langle}
\newcommand{\ra}{\rangle}
\newcommand{\lla}{\llangle}
\newcommand{\rra}{\rrangle}

\newcommand{\sech}{\operatorname{sech}}
\newcommand{\defeq}{\stackrel{\rm{def}}{=}}

\newcommand{\spn}{\operatorname{span}}

\renewcommand{\arraystretch}{1.3}
\usepackage[margin=0.8in]{geometry}
\usepackage{adjustbox}
\usepackage{multirow}

\newtheorem{theorem}{Theorem}

\newtheorem{lemma}[theorem]{Lemma}
\newtheorem{corollary}[theorem]{Corollary}

\theoremstyle{remark}
\newtheorem{remark}[theorem]{Remark}

\numberwithin{equation}{section}
\numberwithin{table}{section}
\numberwithin{figure}{section}
\numberwithin{theorem}{section}

\author[J. Holmer]{Justin Holmer}
\address{Department of Mathematics\\Brown University\\Providence, RI 02912, USA}
\curraddr{} 
\email{justin\_holmer@brown.edu}
\thanks{}

\author[S. Roudenko]{Svetlana Roudenko}
\address{Department of Mathematics \& Statistics\\Florida International University,  Miami, FL 33199, USA}
\curraddr{}
\email{sroudenko@fiu.edu}
\thanks{}

\date{}

\subjclass[2020]{Primary: 35Q53}

\keywords{Zakharov-Kuznetsov equation, spectral property, blow-up, Liouville theorem, localized virial}

\title[Spectral property for 2D ZK]{Spectral property\\ for the 2D Zakharov-Kuznetsov equation}

\begin{document}

\begin{abstract}
We discuss a spectral property for the virial operator of the 2D Zakharov-Kuznetsov (ZK) equation. This is a crucial ingredient to establish blow-up or asymptotic stability of solitary waves in higher-dimensional problems. 

This model in 3D setting was originally introduced by Zakharov and Kuznetsov in plasma physics, and is also a higher-dimensional generalization of the well-known Korteweg-de Vries (KdV) equation. The problem of stability of solitary waves in ZK equation or stable blow-up in modified ZK (or KdV-type) equation is an important physical question, for which virial operators and their spectral properties are the essential elements of the analysis. In this paper we investigate this problem analytically and reduce it to verifying numerically only some signs of inner products and certain eigenvalues.
\end{abstract}

\maketitle


\section{Introduction}

One of the fundamental models in physics is the equation of  
Zakharov and Kuznetsov that they introduced in 70s \cite{ZK1972, ZK},
\begin{equation}\label{ZK}
\partial_t u + \partial_x \left(\Delta_{(x,y,z)} u  + u^2 \right) = 0.  
\end{equation}
This equation is a higher-dimensional extension of the well-known and extensively studied Korteweg-de Vries (KdV) equation, which models, for example, the weakly nonlinear waves in shallow water:
\begin{equation}
\label{gKdV}
\qquad \partial_t u + \partial_x(\partial_x^2 u + u^p) = 0, \quad p=2, \qquad x \in \cR, \quad t \in \cR. 
\end{equation}
The KdV equation is completely integrable, together with its modified version for $p=3$, called mKdV. When other integer powers $p \neq 2, 3$ are considered, it is referred to as the {\it generalized} KdV (gKdV) equation (and is non-integrable). 

In this paper we are interested in the generalized version of Zakharov-Kuznetsov (gZK) equation in two dimensions, namely,  
\begin{equation}\label{gZK}
\qquad 
\partial_t u + \partial_x \left(\Delta_{(x,y)} u  +  |u|^{p-1} \,  u \right) = 0, \quad (x,y) \in \mathbb{R}^2, \quad t\in \mathbb{R},
\end{equation}
with $u$ real-valued and $\Delta_{(x,y)} = \partial_x^2 +\partial_{y}^2$, which for brevity we also denote by $\Delta$. The absolute value sign in the nonlinearity allows one to consider non-integer powers, otherwise, the absolute sign can be omitted. We are specifically interested in gZK equation with the power of nonlinearity $p=3$, since this is a critical equation, where existence of blow-up was expected \cite{K} and numerical simulations are shown in \cite{KRS}; it was proved in our previous work \cite{FHRY}, however, relied on numerically computed certain spectral properties. Besides this critical power, we also consider other (subcritical) powers $1<p<3$, where the solutions should be global and asymptotic stability of solitary waves is expected (in the case when $p$ is close to $2$, see  \cite{CMPS}). We mention that in 3D, the question of Zakharov and Kuznetsov about stability of solitary waves \cite{ZK1972, ZK} in the ZK equation  \eqref{ZK} was recently resolved in our paper \cite{FHRY}, showing the asymptotic stability of solitary waves (also relying on numerically computed spectral properties of a virial operator), for numerical simulations of solutions in the three-dimensional case refer to \cite{KRS3}.

One of the crucial parts of establishing blow-up (in the critical case) or asymptotic stability of solitary waves (in subcritical cases) for the gZK (as a higher-dimensional model) or one-dimensional KdV-type equations is to establish a {\it coercivity} property of a certain virial-type operator, which relies on understanding its spectral properties. 
This is exactly what we investigate in this paper.

During their lifespan, the solutions $u(x,y,t)$ to \eqref{gZK} conserve the $L^2$ norm (often referred to as momentum or mass) and energy (Hamiltonian):
\begin{equation}\label{MC}
M[u(t)]=\int_{\cR^2} \left[u(x,y,t) \right]^2 \, dx\, dy  = M[u(0)],
\end{equation}
\begin{equation}\label{EC}
E[u(t)]=\dfrac{1}{2}\int_{\cR^2}|\nabla u(x,y,t)|^2 \, dx\, dy  - \dfrac{1}{p+1}\int_{\cR^2} \left|u(x,y,t) \right|^{p+1} \, dx\, dy  = E[u(0)].
\end{equation}
(The integral of the solution is also preserved in time, though we omit it as it is not needed in this paper, see \cite[(1.6)]{FHRY2}.) We also mention that unlike the KdV and mKdV, which are completely integrable, the gZK equations do not exhibit complete integrability for any $p$, hence, there are only 3 conserved quantities. 

The {\it scaling invariance} (an appropriately rescaled version of the original solution is also a solution of the equation) is given by 
$$
u_\lambda(x,y,t)=\lambda^{\frac{2}{p-1}} u(\lambda x, \lambda y, \lambda^3t),
$$
which makes the Sobolev $\dot{H}^s$-norm invariant with 
$$
s = 1 - \frac2{p-1}.
$$
When $s=0$ (this corresponds to $p=3$), and thus, the equation \eqref{gZK} is called the $L^2$-critical equation; when $1<p<3$ (i.e., $s<0$) the gZK equation is subcritical. This is the range of $p$ ($1<p \leq 3$) that we consider in this paper.

The generalized Zakharov-Kuznetsov equation has a family of {\it traveling solitary waves} that propagate only in positive $x$-direction,
\begin{equation}\label{Eq:TW}
u(x,y,t) = Q_c(x-ct, y), \, c>0,
\end{equation}
with $Q_c \to 0$ as $|(x,y)| \to \infty$. Here, $Q_c$ is the rescaled ground state $Q$, namely, 
$$
Q_c(x,y) = c^{\frac{2}{p-1}} Q(cx, cy)
$$
with $Q$ being a positive radial solution $H^1(\cR^2)$ of the nonlinear elliptic equation 
\begin{equation}\label{E:Q}
-Q+\Delta Q +|Q|^{p-1} Q = 0.
\end{equation}
Uniqueness of the ground state was established in \cite{K89}, for other properties, see for instance \cite{FHR3}. 
Note that $Q \in C^{\infty}(\cR^2)$, $\partial_r Q(r) <0$ for any $r = |(x,y)|>0$, and 
for any multi-index $\alpha$
\begin{equation}\label{prop-Q}
|\partial^\alpha Q(x,y)| \leq c(\alpha) \, \la r \ra^{-1/2} e^{- r} \quad \mbox{for any}\quad (x,y) \in \cR^2,
\end{equation}
where $\la r\ra = (1+r^2)^{1/2}$. 

Our work \cite{FHRY2} was the first result establishing the existence of blow-up solution in a higher-dimensional KdV-type family of equations, specifically in the 2D critical (cubic) Zakharov-Kuznetsov equation. Our proof was based on the method of Martel-Merle for the 1D generalized (critical) KdV equation \cite{MM-liouville, M-blowup} and, in particular, relied on certain spectral properties of a virial-type operator, which we describe below. In 2D neither the ground state $Q$ is explicit, nor the eigenfunction corresponding to the negative eigenvalue of the linearized operator $\mathcal L$ is (see Section \ref{S:Background}), therefore, in \cite{FHRY2} we investigated the spectral properties numerically, see further discussion in \S \ref{S:motivation}. 

In this paper we investigate the spectral  problem analytically and reduce it to verifying numerically only a few signs of inner products 
and certain eigenvalues, in a spirit of the 1D critical gKdV case as in \cite{MM-liouville} (where the ground state and the properties of the linearized operator are explicit).   

The main result of this paper is
\begin{theorem}[spectral estimate]
Suppose 
$1 < p \leq 3$ and $v$ satisfies the linear equation
$$\partial_t v = \mathcal{L}\partial_x v - (p-1) \frac{ \la v, (Q^p)_x \ra}{ \|Q\|_{L^2}^2} Q.$$
Suppose that $v$ satisfies (for all $t$) the orthogonality conditions
$$\la v, Q \ra = 0 \,, \qquad \la v, \nabla Q \ra =0,$$
and in the case $p=3$ \emph{in addition} satisfies
$$\la v, \Lambda Q \ra =0.$$
Then
$$- \partial_t  \int xv^2 \, dx \gtrsim_p \|v\|_{L^2}^2.
$$
\end{theorem}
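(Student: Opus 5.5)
Starting from the linear equation, I first compute the virial identity. With $\mathcal L = -\Delta + 1 - pQ^{p-1}$, differentiating $\int x v^2$ and integrating by parts gives
\[
-\tfrac12\partial_t\int x v^2 = \int \bigl(3 v_x^2 + v_y^2 + v^2 - pQ^{p-1}v^2 + p\,x\,(Q^{p-1})_x\, v^2\bigr)\,dx\,dy + (p-1)\frac{\la v,(Q^p)_x\ra}{\|Q\|_{L^2}^2}\la xv, Q\ra .
\]
Call the right-hand side the quadratic form $H(v)=\la Av,v\ra$. The nonlocal term is a rank-two symmetric perturbation once symmetrized. So the theorem reduces to coercivity of this virial quadratic form $H$ on the subspace cut out by the orthogonality conditions.

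Next I would reduce to a more tractable operator. Substituting $v=\partial_x w$-type changes is not natural here. Instead I would rescale $x$ to write $3v_x^2+v_y^2$ as a Laplacian: set $v(x,y)=\tilde v(\sqrt3\,x', y)$. Then $H$ becomes a Schr\"odinger form $\la (-\Delta + 1 + V)\tilde v,\tilde v\ra$ plus the rank-two term, where $V = -pQ^{p-1} + p\,x(Q^{p-1})_x$ is evaluated along the anisotropic variables. The point is that $V$ is exponentially decaying, by \eqref{prop-Q}. Hence $A$ is $1$ plus a relatively compact perturbation: the essential spectrum is $[1,\infty)$, and only finitely many eigenvalues lie below $1$.

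The core step is a min–max / Weinstein-type argument. I would compute, with the numerics the paper announces, the eigenvalues of $A$ below some threshold, say one negative eigenvalue with eigenfunction $\phi_1$, plus possibly a zero mode. I would then show that the constraint directions $Q$, $\nabla Q$ (and $\Lambda Q$ when $p=3$) control the nonpositive directions. Concretely, let $n$ be the number of nonpositive eigenvalues of $A$, and let $Y$ be the span of $A^{-1}$ applied to the constraint functions, restricted appropriately. By the standard lemma, $H>0$ on $\{Q,\nabla Q,\dots\}^\perp$ iff the matrix $M_{ij}=\la A^{-1}f_i,f_j\ra$ has exactly $n$ negative eigenvalues. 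This is where "signs of inner products" enter. Several entries are computable analytically from the identities $\mathcal L\nabla Q=0$, $\mathcal L\Lambda Q=-2Q$ in the critical case, and $\mathcal L(\Lambda Q)$-type relations in general. The remaining entries are verified numerically. Note that $\partial_y Q$ decouples by $y$-parity, and $Q,\partial_xQ$ by $x$-parity, so $M$ is block diagonal.

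Positivity on a finite-codimension subspace, together with essential spectrum $\geq 1$, gives a strict bound $H(v)\ge\lambda\|v\|^2$ by a compactness argument. Suppose a minimizing sequence with $H\to0$; it converges weakly to a constrained minimizer. Either that minimizer is nonzero, giving a nontrivial element of the kernel contradicting positivity, or mass escapes to infinity, where $H\ge\|v\|^2$. I expect the main obstacle to be the critical case $p=3$. There scaling creates a genuine degenerate direction, so the extra condition $\la v,\Lambda Q\ra=0$ must be shown to remove exactly one further nonpositive direction. The sign of the corresponding $M$ entry, involving $\la A^{-1}\Lambda Q,\Lambda Q\ra$ and coupling with $Q$, is delicate and must be checked numerically, uniformly in $p\in(1,3]$ near the endpoints.
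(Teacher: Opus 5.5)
\textbf{Verdict: genuine gap.} The error is in your first step, and it matters. From $\partial_t v=\mathcal{L}\partial_x v-2\alpha Q$ with $2\alpha=(p-1)\la v,(Q^p)_x\ra/\|Q\|_{L^2}^2$, integration by parts gives
\[
-\tfrac12\partial_t\int xv^2=\int\Bigl(\tfrac32 v_x^2+\tfrac12 v_y^2+\tfrac12 v^2-\tfrac p2\,Q^{p-1}v^2-\tfrac p2\,x(Q^{p-1})_x\,v^2\Bigr)+(p-1)\frac{\la v,(Q^p)_x\ra}{\|Q\|_{L^2}^2}\la xv,Q\ra .
\]
Your local terms are doubled relative to the nonlocal term. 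More importantly, your $x(Q^{p-1})_x$ term has the wrong sign. Your term $+p\,x(Q^{p-1})_x$ is what the direct flow $\partial_x\mathcal{L}v$ produces, not the adjoint flow $\mathcal{L}\partial_x v$ in the theorem; that difference is exactly why one passes to $v=\mathcal{L}w$. Since $x(Q^{p-1})_x=(p-1)Q^{p-2}Q_r\,x^2/r\le 0$, the correct term is a nonnegative potential, while yours is nonpositive. So the form $H$ you plan to analyze is not the virial form of the theorem, and verifying anything about it numerically would not prove the statement.

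That sign is the structural fact the paper's proof rests on, and your plan does not use it. The paper writes $2B=\mathcal{L}+\mathcal{E}$ with $\mathcal{E}=-2\partial_x^2-p\,x(Q^{p-1})_x\ge 0$ and discards $\mathcal{E}$. After that, $\mathcal{L}+2P$ commutes with the angular decomposition, and $P$ acts only on $\Omega_{1,e}$.
\begin{itemize}
\item On every other mode, Weinstein's coercivity of $\mathcal{L}$ (Lemma~\ref{L:ortho}) suffices. This includes the $p=3$ condition $\la v,\Lambda Q\ra=0$, which is handled analytically in $\Omega_{0,e}$, where $P$ does not act. So in that route the $p=3$ case is no harder than $p<3$, and the obstacle you anticipate does not arise.
\item On $\Omega_{1,e}$ one gets the 1D radial operator $T=U+V$ with the single constraint $Q_r$. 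The bound $\operatorname{ind}T\le\operatorname{ind}U+\operatorname{ind}V\le 0+1$ is analytic: Sturm applies because $Q_r$ is a nodeless zero mode of $U$, and $V$ is rank two with signature $(1,1)$.
\item The $f$-$f^*$ lemma, which is the one-constraint, index-one case of the counting lemma you cite, then needs only that $M$ is invertible and $\lla f^*,f\rra<0$.
\end{itemize}

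Once the identity is corrected, your route is viable in principle: a constrained Morse-index count for the full anisotropic, non-radial 2D operator $A$ with constraints $Q,\nabla Q$ (and $\Lambda Q$). It has the advantage of keeping $\mathcal{E}$. However, it has two costs:
\begin{itemize}
\item You must count numerically every eigenvalue of a non-separable 2D operator below its essential spectrum, with no analytic control over that count.
\item Your use of $A^{-1}$ presupposes $\ker A=\{0\}$, which you leave open.
\end{itemize}
Also, your claim that entries of $\la A^{-1}f_i,f_j\ra$ follow from $\mathcal{L}\nabla Q=0$ and $\mathcal{L}\Lambda Q=-2Q$ is wrong. Since $2A=\mathcal{L}+\mathcal{E}+2P$, one has for example $2AQ_x=\mathcal{E}Q_x+2PQ_x\neq 0$. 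Those identities become usable only after $\mathcal{E}$ is dropped, as the paper does.
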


\begin{remark} This theorem is proved via a lemma ($f-f^\ast$ Lemma) for a general self-adjoint operator on a Hilbert space, which is then applied to a specific setting of the 2D ZK equation (Lemma \ref{L:fstar}), showing in Corollary \ref{C:main} the items needed to be verified (signs of some inner products and eigenvectors), which are then verified numerically (in the range $1.1 \leq p \leq 3$) in Section \ref{S:results}. The limiting case as $p \to 1$ is more involved computationally and will be investigated elsewhere.   
\end{remark}

The consequence, explained briefly in \S \ref{S:2Dblowup} (for further details see \cite[\S 1.1]{FHRY2}), is 
\begin{corollary}[rigidity]\label{C:Rigit}
Let
$1< p \leq 3$.
Suppose that $w$ satisfies the linear equation
\begin{equation}\label{E:w}
\partial_t w = \partial_x \mathcal{L} w + \frac{(p^2-1) \la w, \mathcal{L} (Q^p)_x \ra}{2\int Q^{p+1}}\Lambda Q + \frac{\la w, \mathcal{L}Q_{xx} \ra}{\|Q_x\|_{L^2}^2}Q_x + \frac{\la w, \mathcal{L} Q_{xy} \ra}{\|Q_y\|_{L^2}^2} Q_y
\end{equation}
and for all $t$, satisfies the orthogonality conditions
\begin{equation}\label{E:ortho-2D}
\la w, Q^p \ra =0 \,, \qquad \la w, \nabla Q \ra =0.
\end{equation}
Suppose also that $w$ is uniformly-in-time spatially localized:
\begin{equation}\label{E:uniform}
\sup_{-\infty<t<+\infty} \| \la x \ra^{1/2+} \, w(x,y,t) \|_{L_{xy}^2} \lesssim 1.
\end{equation}
Then, in fact, $w\equiv 0$.
\end{corollary}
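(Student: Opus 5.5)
We derive Corollary~\ref{C:Rigit} from the spectral estimate (the Theorem above) through the standard Martel--Merle duality: $v = \mathcal{L} w$. Throughout, $\Lambda Q = \frac{2}{p-1}Q + x\cdot\nabla Q$ is the scaling generator, with $\mathcal{L}\Lambda Q = -2Q$.

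\emph{Step 1: the orthogonality conditions transfer.} Since $\mathcal{L}$ is self-adjoint, $\mathcal{L}Q = -(p-1)Q^p$ and $\mathcal{L}\nabla Q = 0$, we have
\[
\la v, Q\ra = \la w, \mathcal{L}Q\ra = -(p-1)\la w, Q^p\ra = 0,
\qquad
\la v, \nabla Q\ra = \la w, \mathcal{L}\nabla Q\ra = 0 .
\]
For $p=3$ we also need $\la v,\Lambda Q\ra = 0$. This equals $\la w, \mathcal{L}\Lambda Q\ra = -2\la w, Q\ra$. So at $p=3$ we must first show $\la w, Q\ra = 0$, which \eqref{E:ortho-2D} does not give directly. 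We would obtain it by computing $\partial_t \la w, Q\ra$ from \eqref{E:w}. The term $\la \partial_x\mathcal{L}w, Q\ra = -\la w, \mathcal{L}Q_x\ra$ vanishes. Of the correction terms, only the $\Lambda Q$ coefficient survives, since $\la Q_x,Q\ra=\la Q_y,Q\ra=0$, and at $p=3$ it is multiplied by $\la \Lambda Q, Q\ra = 0$ (the critical mass identity). Hence $\la w,Q\ra$ is constant in time. By the localization \eqref{E:uniform} and a virial/limiting argument, one then argues that this constant must be zero. If that fails, one works instead with the component of $w$ orthogonal to $Q$, using the generalized kernel.

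\emph{Step 2: $v$ satisfies the linear equation of the Theorem.} We apply $\mathcal{L}$ to \eqref{E:w}, using $\mathcal{L}Q_x=\mathcal{L}Q_y=0$ and $\mathcal{L}\Lambda Q=-2Q$:
\[
\partial_t v = \mathcal{L}\partial_x v - \frac{(p^2-1)\la v,(Q^p)_x\ra}{\int Q^{p+1}}\, Q .
\]
The coefficient should then match $(p-1)/\|Q\|_{L^2}^2$ via the Pohozaev identity in 2D, namely $\int Q^{p+1} = \frac{p+1}{2}\cdot\frac{p-1}{?}\ldots$. The constants need to be checked: with $\int|\nabla Q|^2=\frac{p-1}{p+1}\int Q^{p+1}$ and $\int Q^2=\frac{2}{p+1}\int Q^{p+1}$, we get $(p^2-1)/\int Q^{p+1} = (p-1)\cdot\frac{p+1}{\int Q^{p+1}}$. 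This differs from $(p-1)/\|Q\|^2 = (p-1)\frac{p+1}{2\int Q^{p+1}}$ by a factor of $2$, which is absorbed by the $2$ in the denominator of \eqref{E:w}. So the equation matches exactly.

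\emph{Step 3: the virial argument.} By \eqref{E:uniform}, $v = \mathcal{L}w$ is localized in the sense that $\int |x| v^2$ is bounded uniformly in time, modulo the derivatives. Integrating the spectral estimate over $[-T,T]$ gives
\[
\int_{-T}^{T} \|v\|_{L^2}^2\,dt \lesssim \sup_t \Big|\int x v^2\Big| \lesssim 1 .
\]
This step is the main obstacle, because \eqref{E:uniform} controls $w$ in weighted $L^2$, not $\mathcal{L}w$ (which involves $\Delta w$). To overcome it, we would combine local smoothing (Kato) estimates for the linearized flow, which gain a derivative in weighted space over time averages, with the monotonicity argument of \cite{MM-liouville}. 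This yields uniform $H^1$ and then $H^2$ weighted bounds, so $\int x v^2$ is bounded; it may require replacing $x$ by a truncated weight $\phi(x/A)$ and controlling the error terms.

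\emph{Step 4: conclusion.} Letting $T\to\infty$ gives $\int_{\mathbb{R}}\|v\|_{L^2}^2\,dt < \infty$. Combined with the equation (almost monotonicity), this forces $v\to 0$ along a sequence of times, and since the virial quantity is bounded and decreasing, $v\equiv 0$. Then $\mathcal{L}w = 0$, so $w\in\spn\{Q_x,Q_y\}$. The condition $\la w,\nabla Q\ra=0$ from \eqref{E:ortho-2D} then yields $w\equiv 0$.
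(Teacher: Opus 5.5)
Your architecture (pass to $v=\mathcal{L}w$, transfer the orthogonality, identify the $v$-equation, integrate the virial inequality of the theorem in time) is the paper's. However, the step you flag as the main obstacle is a genuine gap, and the paper closes it with an idea your proposal lacks. With $v=\mathcal{L}w$, bounding $\int xv^2$ uniformly in time requires $\la x\ra^{1/2}\Delta w\in L^\infty_tL^2$, which \eqref{E:uniform} does not give. Kato smoothing only yields $L^2_t$-averaged derivative gains, each paid for with a power of the spatial weight, so there is no evident way to bootstrap from $\la x\ra^{1/2+}w\in L^\infty_tL^2$ to uniform weighted $H^2$ bounds. The paper instead uses the \emph{regularized} dual variable $v=(1-\gamma\Delta)^{-1}\mathcal{L}w$ with $\gamma>0$ small. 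Since $(1-\gamma\Delta)^{-1}\mathcal{L}$ has order zero, commutator estimates proved with Schur's test give $\|\la x\ra^{\alpha}v\|_{L^2}\sim\|\la x\ra^{\alpha}w\|_{L^2}$ and $\|v\|_{H^1}\sim\|w\|_{H^1}$. Hence \eqref{E:uniform} controls the virial quantity directly, and one gets $\int_{\cR}\|w\|_{H^1}^2\,dt\lesssim\sup_t\|\la x\ra^{\alpha}w\|_{L^2}^2$ with no extra regularity. Separately, at $p=3$ your Step 1 is incomplete. You correctly show $\la w,Q\ra$ is constant in time, but the ``virial/limiting argument'' and the generalized-kernel fallback do not show the constant vanishes. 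The paper does not derive $\la v,\Lambda Q\ra=-2\la w,Q\ra=0$ from \eqref{E:w}--\eqref{E:uniform} either; it takes it from the proof of Proposition~13.1 in \cite{FHRY2}, and you would have to do the same.

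Step 4, as written, also does not follow: `bounded and decreasing' plus $\|v(t_n)\|_{L^2}\to0$ along one sequence does not give $v\equiv0$. It can be repaired, and then it is a legitimate alternative to the paper's ending. Choose $t_n\to-\infty$ and $s_n\to+\infty$ with $\|v(t_n)\|_{L^2},\|v(s_n)\|_{L^2}\to0$. The uniform $\la x\ra^{1/2+}$ bound on $v$ then forces $\int xv^2\to0$ along both sequences; the `$+$' is needed here, to rule out mass escaping to $|x|=\infty$. So the monotone function $t\mapsto\int xv^2$ vanishes identically, and the virial inequality gives $v\equiv0$. The paper closes differently, via the form $\la\mathcal{L}w,w\ra$. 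This form is $\lesssim\|w\|_{H^1}^2$, strictly positive for $w\neq0$ satisfying \eqref{E:ortho-2D} (Lemma~\ref{L:f-fstar} with $f=Q^p$, $f^*=-Q/(p-1)$ on $(\spn\{\nabla Q\})^\perp$), and constant in time, so its integrability over $\cR$ forces $w=0$. Strictly, if $a(t)$ is the $\Lambda Q$-coefficient in \eqref{E:w}, then $\partial_t\la\mathcal{L}w,w\ra=-4a\la w,Q\ra$ and $\partial_t\la w,Q\ra=a\la\Lambda Q,Q\ra$. So for $1<p<3$ the exactly conserved form is $\la\mathcal{L}w,w\ra+2\la w,Q\ra^2/\la\Lambda Q,Q\ra$. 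Your monotonicity ending needs no conservation law, but it needs the weighted bound on $v$ itself, which again comes from the regularization. Finally, your constant bookkeeping in Step 2 is muddled: the $2$ in the denominator of \eqref{E:w} was already used against $\mathcal{L}\Lambda Q=-2Q$. The clean statement is that $\partial_t\la v,Q\ra=0$ and $\la\mathcal{L}\partial_xv,Q\ra=(p-1)\la v,(Q^p)_x\ra$ force the coefficient of $Q$ to be $(p-1)\la v,(Q^p)_x\ra/\|Q\|_{L^2}^2$, as in the theorem.
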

(The notation $\la x \ra^{1/2+}$ means that there exists $\epsilon>0$ such that this inequality holds with the power $1/2+\epsilon$.)

This is applied as follows.  In the case $p=3$, the goal is to prove blow-up for negative energy solutions near $Q(x,y)$, see the statement below in Theorem \ref{T:main}. In the case $1<p<3$, one seeks asymptotic stability of traveling waves with profile $Q(x,y)$.  In either case, argue by contradiction, and pass to weak limits in time, to construct a sequence of \emph{radiation free} solutions $\tilde u_n$ (to the nonlinear problem).  A key component of this construction is the implementation of a {\it monotonicity} estimate.  In the case $p=3$, one needs to use the integral conservation law \cite[(1.6)]{FHRY2} to show the $\tilde u_n$ solutions are global.  These solutions approach $Q$ as $n\to \infty$.  Take the radiation free solutions $\tilde u_n$, linearize around $Q$, and take a \emph{renormalized} limit of the remainder $\tilde \epsilon_n$ as $n\to \infty$.  This generates a nontrivial solution $w$ to the linear equation above. Appealing to the Corollary \ref{C:Rigit}, we obtain the contradiction.  
\smallskip

This paper is organized as follows: in Section \ref{S:Background} we define the linearized operator $\mathcal L$ and state useful identities, then we give further explanation and motivation for the spectral virial investigations in \S \ref{S:motivation}, which is followed up by a review of our 2D blow-up result (for the cubic ZK, $p=3$), and in \S \ref{S:general-p} we describe a general $p$ setting. 
In the Section \ref{S:reduction} we consider polar coordinates on $\mathbb R^2$ and explain a specific orthogonal decomposition of the whole space $L^2(\mathbb R^2)$ into the {\it even} and {\it odd} angular modes, and then the reduction of our problem to a subspace $\Omega_{1,e}$ (first even angular mode subspace). In Section \ref{S:f-fstar} we prove Lemma \ref{L:f-fstar} about spectral properties for a general self-adjoint operator on a Hilbert space and show how to apply it to our case of the Schr\"odinger-type operator $T$ (Lemma \ref{L:fstar}) and what items need to be checked (or verified numerically) in Corollary \ref{C:main}. In Section \ref{S:test} we show the benchmarks for our numerics and conclude with Section \ref{S:results} on the numerical verification of the assumptions in Corollary \ref{C:main} (for the range of $1 <p \leq 3$). 
\medskip

{\bf Acknowledgments. }
J.H. was partially supported by the NSF grant DMS-2055072.
S.R. was partially supported by the NSF grant DMS-2055130. 


\section{Background}
\label{S:Background}

\subsection{Linearized operator $\mathcal L$}

The operator $\mathcal L$, which is obtained by linearization around the ground state $Q$, is defined by
$$
\mathcal L = -\Delta +1 -p Q^{p-1}.
$$
Note that $\mathcal L(Q) = -(p-1) Q^{p}$.
For properties of $\mathcal L$, we refer the reader to Section 3 in \cite{FHR3} and the references therein.  

We record several useful identities. We start with the Pokhozhaev identities (obtained by multiplying either by $Q$ or $x \cdot \nabla Q$ the equation \eqref{E:Q} and integrating)
\begin{equation}\label{P:1}
\| \nabla Q \|_{L^2(\mathbb R^2)}^2 = \frac{p-1}2 \|Q\|^2_{L^2(\mathbb R^2)},
\end{equation}
\begin{equation}\label{P:2}
\| Q \|_{L^{p+1}(\mathbb R^2)}^{p+1} = \frac{p+1}2 \|Q\|^2_{L^2(\mathbb R^2)}.
\end{equation}
Next, defining the scaling symmetry generator 
\begin{equation}\label{E:Lambda}
\Lambda Q = \tfrac2{p-1} Q + {x} Q_x + y Q_y, \quad (x,y) \in \mathbb R^2, 
\end{equation}
we record some identities with $\Lambda$:
$$
\mathcal L(\Lambda Q) = -2 Q,
$$
\begin{equation}\label{LambdaQ}
\la Q, \Lambda Q \ra = \frac{3-p}{p-1} \| Q \|^2_{L^2(\mathbb R^2)}.
\end{equation}

\subsection{Motivation for spectral virial investigations}\label{S:motivation}

In \cite{FHRY2} we proved the existence of blow-up in finite or infinite time in the critical ($p=3$) ZK equation by introducing a new approach to study the virial operator (as well as several other novel elements such as a 2D monotonicity, weighted Gagliardo-Nirenberg inequality, usage of the $L^1$ (integral)-type conservation law under limited regularity, and also subtleties in regularization in a passage to the dual problem).  
The inspiration came from the 1D works on the critical gKdV equation in a series of papers by Martel and Merle, see \cite{MM-liouville, M-blowup, MM-KdV3}, as well as the adaption of it to the asymptotic stability result for the 2D quadratic ZK equation in \cite{CMPS}. The crucial element of this approach is the spectral property of a virial operator: it has played a fundamental role in the analysis of the 1D generalized KdV equation, see for example, \cite{MM-liouville, M-blowup, Martel-dual}, and it was also a key element in our proof \cite{FHRY} of the asymptotic stability of solitons in the 3D ZK equation, postulated by Zakharov and Kuznetsov in their original work deriving the ZK equation and discussing stability of the 3D solitons \cite{ZK}.

Since the ground state is not given explicitly in 2D (or higher dimensions), this creates extra challenges to study  spectral properties of the Schr\"odinger-type operators in 2D and higher, in particular, the ones connected to the ZK equation.  

In this paper, we show a simplified approach to verify the spectral properties, with rather simple numerically-assisted proof, which basically comes down to verifying the signs of certain inner products. Numerical verification of signs of inner products as well as funding functions that are inverses of some Schr\"odinger-type nonlinear operators can be tracked down even in the 1D KdV analysis in papers \cite{MM-liouville, M-blowup, MM-KdV3}, as well as in 2D works \cite{FHRY, CMPS}, and 3D \cite{FHRY2}. For other equations, such as the nonlinear Schr\"odinger equation or Hartree equation, verification of spectral properties for virial operators are also common, see \cite{FMR, YRZ2018, YRZ2020}. 

\subsection{Blow-up in the 2D critical (cubic) ZK} \label{S:2Dblowup}
To describe the findings in this paper, we first recall our result about the existence of blow-up in 2D critical (cubic) ZK equation
\begin{equation}\label{E:ZK}
\partial_t u + \partial_x \left(\Delta_{(x,y)} u + u^3 \right) = 0, 
\end{equation}
that we proved in \cite{FHRY2} (for later results on blow-up in 2D critical ZK equation, see, for example, \cite{Gong}, and for numerical simulations \cite{KRS}).
\begin{theorem}(\cite{FHRY2})\label{T:main}
There exists $\alpha_0>0$ such that the following holds. Suppose that $u(t)$ is an $H^1$ solution to (ZK) with $E[u]<0$ and
$0< \|u\|^2_{L^2(\mathbb R^2)} - \|Q\|^2_{L^2(\mathbb R^2)} \leq \alpha$ for some $\alpha>0$. 
Then $u(t)$ blows up in finite or infinite forward time.
\end{theorem}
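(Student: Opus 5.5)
We argue by contradiction, following the Martel--Merle scheme for the critical gKdV equation. Suppose $u$ is an $H^1$ solution of \eqref{E:ZK} with $E[u]<0$ and $\|Q\|_{L^2}^2<\|u\|_{L^2}^2\le \|Q\|_{L^2}^2+\alpha$, and suppose $u$ does not blow up in forward time. Then $u$ is global forward and $\sup_{t\ge0}\|\nabla u(t)\|_{L^2}<\infty$.

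\emph{Step 1: modulation.} The sharp Gagliardo--Nirenberg inequality, with $Q$ as optimizer, has the following consequence. Negative energy together with mass just above $\|Q\|_{L^2}^2$ forces $u(t)$ to be close in $H^1$, uniformly in $t$, to a rescaled and translated ground state. We therefore write
\[
u(t,x,y)=\lambda(t)^{-1}(Q+\epsilon)\big(t,\lambda(t)^{-1}(x-x(t)),\lambda(t)^{-1}(y-y(t))\big).
\]
The parameters $\lambda,x,y$ are chosen by the implicit function theorem so that $\epsilon$ satisfies three orthogonality conditions adapted to the linearized problem (compare \eqref{E:ortho-2D}). We also get $\|\epsilon\|_{H^1}\lesssim \delta(\alpha)\to0$. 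Since $\|\nabla u\|_{L^2}$ is bounded, $\lambda(t)\gtrsim1$. Since $E[u]<0$ and $E[Q]=0$, $\epsilon$ can never vanish: expanding the energy gives a quantitative lower bound on $\epsilon$ in terms of $|E[u]|\lambda^{-2}$.

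\emph{Step 2: compactness and a radiation-free limit.} Take a sequence $t_n\to\infty$ and pass to a weak $H^1$ limit of the renormalized profiles $\epsilon(t_n)$. Weak continuity of the flow then yields a limiting solution $\tilde u$ that stays close to $Q$ for all times. This limit needs to be global in both time directions. Here the key extra ingredient for $p=3$ is the integral ($L^1$-type) conservation law; it is needed to control the scaling parameter of the limit backward in time, at limited regularity. Next comes a 2D monotonicity estimate: $L^2$ mass to the right of the soliton, measured with a weight tilted in $x$ and $y$, is almost non-increasing. From it we obtain that $\tilde u$ is uniformly localized, i.e.\ the analogue of \eqref{E:uniform} holds. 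Thus $\tilde u$ is a radiation-free solution.

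\emph{Step 3: rigidity, the main obstacle.} We show that a uniformly localized solution near $Q$ must be $Q$ up to symmetries. Linearize $\tilde u$ around $Q$ and renormalize the remainder along a sequence approaching $Q$. In the limit this produces a nontrivial $w$ that solves \eqref{E:w}, satisfies the orthogonality conditions \eqref{E:ortho-2D}, and obeys the localization bound \eqref{E:uniform}. Corollary \ref{C:Rigit} forces $w\equiv0$, which contradicts nontriviality. Hence $\tilde u$ is exactly a soliton, and its remainder $\epsilon$ vanishes identically. On the other hand, by weak lower semicontinuity the limit inherits the non-degeneracy produced by $E[u]<0$ in Step 1 (the lower bound on $\epsilon$), which is the desired contradiction.

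The hard part is Corollary \ref{C:Rigit}. Its proof is a virial argument in the dual variable $v=\mathcal L w$ and rests on the coercivity in the spectral estimate (the main theorem above). That coercivity requires the extra orthogonality $\la v,\Lambda Q\ra=0$ when $p=3$, and it is precisely the input that in 2D cannot be verified in closed form.
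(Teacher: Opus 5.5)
There is a genuine gap in your final contradiction, and a second one in how Step 3 produces $w$.

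\textbf{The final contradiction.} The non-degeneracy you get from Step 1 is a lower bound on the full norm $\|\epsilon(t_n)\|_{H^1}$. Weak lower semicontinuity gives $\|\tilde\epsilon\|_{H^1}\le\liminf_n\|\epsilon(t_n)\|_{H^1}$, which is the wrong direction. The excess mass can move away from the soliton, so $\epsilon(t_n)\rightharpoonup 0$ is fully compatible with your lower bound on $\|\epsilon(t_n)\|_{H^1}$. In that case, concluding that $\tilde u$ is a soliton contradicts nothing.

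What you need is that \emph{negative energy} survives the weak limit. The paper's argument relies on exactly this: its radiation-free $\tilde u_n$ have $E[\tilde u_n]<0$, and the contradiction is $E[\tilde u_n]=E[Q]=0$. This is an upper-semicontinuity property special to the critical problem. The standard way to get it is as follows.
\begin{itemize}
\item Write $v_n=Q+\epsilon(t_n)\rightharpoonup\tilde v$.
\item The Hilbert-space identity for $\|\nabla v_n\|_{L^2}^2$ and Brezis--Lieb for $\|v_n\|_{L^4}^4$ give $E[v_n]=E[\tilde v]+E[v_n-\tilde v]+o(1)$ and $M[v_n]=M[\tilde v]+M[v_n-\tilde v]+o(1)$.
\item Since $\tilde v$ is close to $Q$ (here lower semicontinuity is used correctly), the defect has mass at most $\alpha+C\delta(\alpha)+o(1)<\|Q\|_{L^2}^2$.
\item The sharp Gagliardo--Nirenberg inequality then gives $E[v_n-\tilde v]\ge 0$.
\item Hence $E[\tilde v]\le\limsup_n\lambda(t_n)^2E[u]<0$, using $\lambda\gtrsim 1$.
\end{itemize}
Note that with your scaling $E[Q+\epsilon]=\lambda^{2}E[u]$, not $\lambda^{-2}$. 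Energy conservation then makes the asymptotic object incompatible with being a soliton, and no lower bound on $\epsilon$ is needed.

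\textbf{Step 3.} A single radiation-free $\tilde u$ has a remainder of fixed size $\sim\delta(\alpha)$ that does not tend to zero. So there is no ``sequence approaching $Q$'' along which to renormalize. Dividing by the size of the remainder does not make the quadratic terms negligible, and the linear equation \eqref{E:w} does not emerge in the limit.

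The paper runs the argument over a sequence of solutions instead:
\begin{itemize}
\item Negating the theorem for every $\alpha_0$ gives counterexamples with excess mass $\alpha_n\to 0$.
\item From these one builds radiation-free $\tilde u_n$ with negative energy and $\tilde\epsilon_n\to 0$.
\item The renormalized limit of $\tilde\epsilon_n$ is the nontrivial $w$ to which Corollary \ref{C:Rigit} applies.
\end{itemize}
This requires the localization \eqref{E:uniform} for the renormalized remainders to hold uniformly in $n$; that comes from the smallness of $\tilde\epsilon_n$ together with monotonicity.

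Equivalently, you could state a nonlinear Liouville theorem with a uniform closeness threshold and prove it by contradiction along such a sequence. As written, your proposal contains neither version.
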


Assuming that the theorem is false, we constructed a sequence of solutions $\{\tilde u_n\}$ of ZK equation \eqref{E:ZK} with negative energy $E(\tilde u_n)<0$, which is well-behaved: these solutions are uniformly-in-time $H^1$ bounded and are spatially localized, which was shown by smallness of the remainder functions $\tilde \epsilon_n$ (that were defined for some time-dependent parameters $\tilde \lambda_n$ (scaling), $\tilde x_n, \tilde y_n$ (translation) ) as
$$
\tilde \epsilon_n(x,y,t) \defeq \tilde \lambda_n(t) \, \tilde u_n \big(\tilde\lambda_n(t) x + \tilde x_n(t), \tilde \lambda_n(t) y + \tilde y_n(t), t \big)-Q(x,y)
$$
and satisfied the orthogonality conditions $\la \tilde \epsilon_n, \nabla Q\ra =0$, $\la \tilde \epsilon_n, Q^3 \ra=0$).
We then obtained a rigidity property of $\tilde u_n$'s for sufficiently large $n$, which stated that these well-behaved solutions (for large $n$) are nothing but a rescaled and shifted solitary wave $Q$. In this critical case of ZK equation $E[Q] = 0$ (and hence, $E[\tilde u_n]=0$ for large $n$), which contradicted the initial assumption of negative energy. 

For the rigidity, or nonlinear Liouville theorem, we considered a renormalized limit of the remainders $\{\tilde \epsilon_n\}$, which produced a (nontrivial) solution $w$ to a linearized (around $Q$) ZK equation, thus, reducing the problem to a linear Liouville property.
The equation that $w$ solves is given in \eqref{E:w} (with $p=3$) and it also satisfies the orthogonality conditions \eqref{E:ortho-2D} together with uniform-in-time estimate \eqref{E:uniform}. This linear Liouville property will imply that such $w\equiv 0$, and in order to show that we convert the problem to the adjoint by considering $v= \mathcal L w$. This formulation further simplifies the equation \eqref{E:w} by dropping terms with $\Lambda Q, \nabla Q$ due to orthogonality conditions, and from now on we work with $v$, which solves  
\begin{equation}\label{E:v1}
\partial_t v =  \mathcal{L}\partial_x v  -2 \alpha Q
\end{equation}
for some time dependent coefficient $\alpha (t)$, where $v$ satisfies the orthogonality conditions 
\begin{equation}\label{E:ortho-v}
\la v, Q \ra =0, \quad \la v, Q_x \ra =0, \quad \la v, Q_y\ra =0.
\end{equation}

In \cite{FHRY} we show that for any $-\infty < t< \infty$, the following inequality holds 
\begin{equation}
\label{E:v-virial-1}
\| v\|_{L_t^2H_{xy}^1} \lesssim \| \la x \ra^{1/2} v \|_{L_t^\infty L_{xy}^2},
\end{equation}
for which 
we differentiate the orthogonality condition $\la v,Q \ra =0$, yielding
$$
0= \partial_t \la v, Q \ra = \la \mathcal{L}\partial_x v, Q \ra -2 \alpha \la Q, Q \ra \quad \mbox{with} \quad 
\alpha = \frac{\la v, 3Q^2Q_x \ra}{\la Q, Q \ra},
$$
so that we can rewrite \eqref{E:v1} as
$$
\partial_t v = \mathcal{L} \partial_x v - \frac{\la v, 6Q^2Q_x \ra}{\la Q, Q\ra} Q.
$$
Now, in order to obtain \eqref{E:v-virial-1}, we compute 
\begin{equation}
\label{E:v-virial}
-\frac12 \partial_t \int_{\mathbb R^2} x v^2(x,y) \, dx\, dy = \la Bv,v\ra + \la P v,v\ra  =: \la A v,v\ra,
\end{equation}
where
\begin{equation}\label{E:B1}
B := \frac12 - \frac32\partial_x^2 - \frac12\partial_y^2  - \frac32 Q^2 - 3 x QQ_x
\end{equation}
and $P$ is the rank $2$ self-adjoint operator
\begin{equation}\label{E:P1}
Pv := \frac12 \frac{ 6\, Q^2Q_x}{\la Q, Q\ra} \la v, xQ\ra+ \frac12 \frac{xQ}{\la Q, Q\ra} \la v, 6Q^2Q_x\ra.
\end{equation}
Thus, to conclude \eqref{E:v-virial-1}, we had to verify the coercivity of the operator $A := B+P$, i.e., 
$$
\la Av,v \ra \geq c \, \la v,v \ra > 0
$$
for some positive constant $c$, 
provided that it satisfies the orthogonal conditions 
$$
\langle v,Q \rangle=\langle v, \nabla Q \rangle=0.
$$

Since the continuous spectrum of $A$ (or of $B$ \eqref{E:B1}) starts from $\frac{1}{2}$, one would need to understand the discrete spectrum below $\frac{1}{2}$. 
To do that in \cite{FHRY2}, as well as in \cite{FHRY}, we studied the discrete spectrum of $A$ numerically (see details in the appendix of \cite{FHRY2}). Another motivation for the numerically-assisted proof is that in higher dimensions the ground state $Q$ is not given explicitly, and furthermore, the rank 2 operator $P$ has a non-trivial representation. 
In \cite{FHRY2} we showed one possible numerical approach to obtain the discrete spectrum, which was based mainly on computing the eigenvalues and eigenfunctions and then checking the angles between the appropriate functions. 

Below we develop an approach, which is very minimally based on the numerics, but rather analyzes general properties of the 2D Schr\"odinger operator with potentials. 

In order to complete the proof, one needs to show that the estimate (that we obtained above)
$$ \sup_t \|\la x \ra^\alpha v \|_{L^2}^2 \gtrsim \int_t \|v\|_{H^1}^2 $$
implies the corresponding estimate for $w$.  Since $v = (1-\gamma \Delta)^{-1} \mathcal{L}w$ for $\gamma>0$ small, this amounts to proving that, for sufficiently small $\gamma>0$,
$$ \| \la x \ra^\alpha (1-\gamma \Delta)^{-1} \mathcal{L}w \|_{L^2} \sim \| \la x \ra^\alpha w \|_{L^2},
$$
$$\| (1-\gamma \Delta)^{-1} \mathcal{L} w\|_{H^1} \sim \|w\|_{H^1}.
$$
These reduce to standard commutator estimates that can be proved using Schur's test. Moreover, we have 
$$\| w\|_{H^1}^2 \gtrsim \la \mathcal{L}w,w \ra.
$$
These estimates do not rely on the orthogonality conditions and together give
$$ \sup_t \|\la x \ra^\alpha w \|_{L^2}^2 \gtrsim \int_t \; \la \mathcal{L}w,w \ra \; dt.
$$
Note that $\la \partial_t \mathcal{L}w,w \ra =0$. Via the orthogonality conditions for $w$, $\la \mathcal{L}w,w \ra >0$.  This implies $w=0$.

We point out that even in the 1D case of the critical gKdV equation, when $Q$ is given explicitly as a $\sech$ function, and where the spectral properties of certain classical Schr\"odinger-type operator are available and well-known, certain function (an inverse of the ground state $Q$ under a specific second order nonlinear operator) had to be computed numerically as well as signs of some inner products also had to be checked numerically, see (177), Remark on p. 415 and the proof of Lemma 26 in \cite{MM-liouville}. 
\smallskip

We next consider a general case of the 2D Zakharov-Kuznetsov equation with any nonlinearity $p>1$ and set up a general virial operator, for which we then investigate the spectral properties.

\subsection{General nonlinearity $p$}\label{S:general-p}

Recall the direct problem from \eqref{E:w}
$$
\partial_t w = \partial_x \mathcal{L} w + \alpha \Lambda Q + \beta Q_x + \gamma Q_y
$$
for time-dependent coefficients $\alpha$, $\beta$, and $\gamma$, with  $w$ satisfying the orthogonality conditions 
$$ 
\la w, Q^p\ra =0, \quad \la w, Q_x \ra =0, \quad \la w, Q_y \ra =0.
$$

Reformulating it as an adjoint problem, $v=\mathcal L w$, we have 
\begin{equation}\label{E:v}
\partial_t v =  \mathcal{L} \partial_x v -2 \alpha Q 
\end{equation}
with the time-dependent coefficient $\alpha$ given by 
$$
\alpha = \frac{p-1}2 \, \frac{\la v, (Q^p)_x \ra}{\la Q,Q \ra},
$$
with $v$ satisfying the orthogonality conditions 
\begin{equation}\label{E:ortho-v-p}
\la v, Q \ra =0, \quad \la v, Q_x \ra =0, \quad \la v, Q_y \ra =0.
\end{equation}

Differentiating in time the quantity $\int x v^2$ from \eqref{E:v-virial} and using the equation \eqref{E:v}, we obtain
\begin{equation}\label{E:virial}
-\frac12 \partial_t \int_{\mathbb R^2} x v^2 = \la Bv,v \ra + \la P v, v \ra,
\end{equation}
where 
\begin{align}
B = -\frac32 \partial_{xx} - \frac12 \partial_{yy} + \frac12 - \frac{p}2 \, Q^{p-1} - \frac{p}2 \, x (Q^{p-1})_x, \label{E:B}\\
P v = \frac{p-1}2 \frac{(Q^{p})_x}{\|Q\|^2_2} \la v ,\,xQ  \ra +  \frac{p-1}2 \, \frac{x Q}{\|Q\|^2_{L^2}} \la v, (Q^p)_x \ra. \label{E:P}
\end{align}

Recalling the linearized operator $\mathcal L$, we write $2B$ as 
\begin{equation}\label{E:decompB+P}
2B = \mathcal L + \mathcal E, 
\end{equation}
noting that $\mathcal{E} \defeq -2 \partial_x^2 - x \,p(Q^{p-1})_x$ is a positive definite operator (the second term is a non-negative function).  The positivity of $\mathcal{L}$, under the relevant orthogonality conditions, is classical and summarized in the following lemma.  Note that the proof of Proposition 13.1 in \cite{FHRY2} implies that in the case $p=3$ we can also assume $\la v, \Lambda Q \ra =0$.

\begin{lemma}\label{L:ortho}
Suppose $v$ satisfies the orthogonality conditions \eqref{E:ortho-v-p} for $1<p<3$, and for $p=3$ in addition satisfies 
\begin{equation}\label{E:ortho-p3}
\la v, \Lambda Q \ra =0, 
\end{equation}
where $\Lambda Q$ is defined in \eqref{E:Lambda}. 
Then there exists $c_p>0$ such that 
$$
\la \mathcal{L} v, v \ra \geq c_p \|v \|_{L^2}^2.
$$
\end{lemma}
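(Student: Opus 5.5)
The plan is the classical spectral argument for $\mathcal L$. We use the known facts for $1<p\le 3$ in 2D (see \cite{FHR3}). First, $\mathcal L$ is self-adjoint with essential spectrum $[1,\infty)$. Second, it has exactly one negative eigenvalue $-\lambda_0<0$, with a radial positive eigenfunction $\chi_0$. Third, its kernel is $\spn\{Q_x,Q_y\}$ (nondegeneracy). The proof would have three steps:
\begin{enumerate}
\item show $\la\mathcal L v,v\ra\ge 0$ on the constrained subspace;
\item show equality forces $v=0$;
\item upgrade to a uniform constant $c_p$ by compactness.
\end{enumerate}

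For nonnegativity, the key object is the function $\mathcal L^{-1}Q$. Since $\mathcal L(\Lambda Q)=-2Q$, we have $\mathcal L^{-1}Q=-\tfrac12\Lambda Q$ on $(\ker\mathcal L)^\perp$. The radial function $\Lambda Q$ is orthogonal to $\nabla Q$, so
$$\la \mathcal L^{-1}Q,Q\ra=-\tfrac12\la\Lambda Q,Q\ra=-\tfrac{3-p}{2(p-1)}\|Q\|_{L^2}^2,$$
by \eqref{LambdaQ}.

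\textbf{Case $1<p<3$.} Here this quantity is strictly negative. The standard Weinstein-type lemma then applies: if a self-adjoint operator has exactly one negative eigenvalue and $\la\mathcal L^{-1}Q,Q\ra<0$, then $\mathcal L\ge0$ on $\{Q\}^\perp$. To prove it, minimize $\la\mathcal Lv,v\ra$ over $\|v\|=1$, $v\perp Q$, and suppose the minimum $\mu$ is negative. The Lagrange condition gives $(\mathcal L-\mu)v=\kappa Q$. The function $g(\mu)=\la(\mathcal L-\mu)^{-1}Q,Q\ra$ is increasing on $(-\lambda_0,0)$ and satisfies $g(\mu)\to-\infty$ as $\mu\downarrow-\lambda_0$, since $\la Q,\chi_0\ra\neq0$ by positivity. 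Also $g(0)<0$, so $g$ has no zero in $(-\lambda_0,0)$. This contradicts $v\perp Q$, which forces $g(\mu)=0$.

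\textbf{Case $p=3$.} Now $\la\Lambda Q,Q\ra=0$, so the argument above only gives $\mathcal L\ge 0$ on $Q^\perp$ through the borderline case $g(0)=0$. Here we use the extra condition $v\perp\Lambda Q$. Decompose $v=a\chi_0+\tilde v$ and argue directly. Equivalently, use the identity $\mathcal L\Lambda Q=-2Q$ to see that the $Q^\perp$-minimizer with zero value lies in $\spn\{\Lambda Q\}\oplus\ker\mathcal L$. Any such direction is excluded by the constraints $v\perp\Lambda Q$ and $v\perp\nabla Q$.

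For the zero case, suppose $\la\mathcal Lv,v\ra=0$ with $\|v\|=1$ and all the constraints. Then $v$ is a minimizer, so $\mathcal Lv=\kappa Q+\beta\cdot\nabla Q$, with an additional $\delta Q^3$-type multiplier from the $\Lambda Q$ constraint when $p=3$; pairing with $\nabla Q$ kills $\beta$. For $p<3$, pairing with $v$ gives $\kappa\la Q,v\ra=0$. On the other hand $v=-\tfrac{\kappa}{2}\Lambda Q+$(kernel part), so $\la v,Q\ra=0$ forces $\kappa=0$. Then $v\in\ker\mathcal L=\spn\nabla Q$, which is orthogonal to $v$, hence $v=0$. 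For $p=3$, the same conclusion follows using $v\perp\Lambda Q$.

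For coercivity, take a minimizing sequence $v_n$ with $\|v_n\|=1$ and $\la\mathcal Lv_n,v_n\ra\to0$. It is bounded in $H^1$, so along a subsequence $v_n\rightharpoonup v$. Write $\la\mathcal Lv_n,v_n\ra=\|\nabla v_n\|^2+\|v_n\|^2-p\int Q^{p-1}v_n^2$. The potential term converges by local compactness and the decay of $Q$, which forces $p\int Q^{p-1}v^2\ge1$, so $v\neq0$. By weak lower semicontinuity $\la\mathcal Lv,v\ra\le0$, and $v$ still satisfies the constraints. This contradicts the previous step.

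The main obstacle is the critical case $p=3$. There the Weinstein quantity vanishes, and one must carefully handle the degenerate minimization with the extra $\Lambda Q$ constraint.
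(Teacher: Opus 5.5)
Your proposal is correct and follows essentially the paper's route. Both arguments establish nonnegativity of $\mathcal L$ on $\{Q\}^\perp$: the paper cites it from Weinstein, while you rederive it via the monotone function $g(\mu)$. Both then run a Lagrange-multiplier analysis of a zero-energy minimizer, using $\mathcal L\Lambda Q=-2Q$ together with $\la \Lambda Q,Q\ra>0$ for $p<3$ and $\|\Lambda Q\|_{L^2}\neq 0$ for $p=3$. Your weak-limit argument is a genuine addition: it supplies the existence of that minimizer, which the paper simply assumes.

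One slip to fix: the multiplier attached to the constraint $\la v,\Lambda Q\ra=0$ is $\eta\,\Lambda Q$, not a multiple of $Q^3$. It vanishes upon pairing with $\Lambda Q$, because $\la \mathcal L v,\Lambda Q\ra=-2\la v,Q\ra=0$ and $\la Q,\Lambda Q\ra=0$ at $p=3$. Alternatively, as you note earlier, minimality on $\{Q\}^\perp$ alone already gives $\mathcal L v\in\spn\{Q\}$, so $v\in\spn\{\Lambda Q\}\oplus\ker\mathcal L$ and the remaining constraints force $v=0$.
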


\begin{proof}
In \cite{FHR3} (see Theorem 3.1 and Lemmas 3.3-3.6) we summarized several known positivity estimates for the operator $\mathcal{L}$. In Weinstein \cite[Prop 2.7, Prop. 2.8(b)]{W85}, it is shown that 
$$\inf_{\substack{v \\ \la v, Q \ra =0}} \la \mathcal{L} v,v \ra =0  \quad \text{and} \quad \ker \mathcal{L} = \spn \{\nabla Q \} \,.$$

First, consider the case $p=3$ in which $\la \Lambda Q, Q \ra =0$.  Let 
$$
\alpha = \inf \{\, \la 
\mathcal{L} v, v \ra \; : \;  v \text{ satisfies }\la v, \Lambda Q \ra =0\,, \; \la v, Q \ra =0\,,\; \la v, \nabla Q \ra=0\,,\; \|v\|_{L^2}=1 \, \}
$$
Clearly $\alpha \geq 0$.   Arguing by contradiction, suppose $\alpha=0$.   Let $v$ be a minimizer.  Then $v$ satisfies the Euler-Lagrange equation
$$
\mathcal{L} v = \beta Q + \gamma_1 Q_x + \gamma_2 Q_y + \eta \Lambda Q + \omega v.
$$
Pairing with $v$, we find that $\omega=\alpha=0$. Pairing with $Q_x$ and $Q_y$, we find that $\gamma_1=0$ and $\gamma_2=0$.  Since $\mathcal{L}\Lambda Q = -2Q$ and $\la Q, \Lambda Q \ra =0$, pairing with $\Lambda Q$ yields $\eta =0$.  Thus,
$$
\mathcal{L}v = \beta Q. 
$$
Since $\mathcal{L}\Lambda Q = -2Q$, we have
$$
v = -\frac12 \beta \Lambda Q, 
$$
which implies $\beta =0$, a contradiction, and hence, $\alpha >0$.

Now consider $1<p<3$.  Then $\la \Lambda Q, Q \ra >0$. We repeat the above proof with $\eta =0$ to get
$$
\inf \{ \, \la 
\mathcal{L} v, v \ra \; : \; v \text{ satisfies } \la v, Q \ra =0\,,\; \la v, \nabla Q \ra=0\,,\; \|v\|_{L^2}=1 \, \} >0,
$$
finishing the proof. 
\end{proof}

\begin{remark}
This lemma implies that $\mathcal{L}$ is coercive on all subspaces $\Omega_{k,e}$ and $\Omega_{k,o}$, defined in the next section.  
\end{remark}

\section{Reduction to $\Omega_{1,e}$}\label{S:reduction}

On $L^2(\mathbb{R}^2)$, define the operators using polar coordinates as
$$[\Pi_{k,e} w] (r,\theta) =\frac{1}{\pi} \left(\int_{\varphi=0}^{2\pi} w(r,\varphi) \cos (k\varphi) \, d\varphi\right) \cos (k\theta),$$
$$[\Pi_{k,o} w] (r,\theta) =\frac{1}{\pi} \left(\int_{\varphi=0}^{2\pi} w(r,\varphi) \sin (k\varphi) \, d\varphi\right) \sin (k\theta).$$

\newcommand{\ran}{\operatorname{ran}}

These operators are orthogonal projections on $L^2(\mathbb{R}^2)$ (each  satisfies $\Pi^2=\Pi$ and $\Pi^*=\Pi$).  Let
$$\Omega_{k,e} = \ran \Pi_{k,e} \,, \qquad \Omega_{k,o} = \ran \Pi_{k,o}.$$
(Note that $\Pi_{0,o}=0$ and $\Omega_{0,o}=\{0\}$.)  
Then we have the even and odd angular mode orthogonal decomposition
$$
L^2(\mathbb{R}^2) = \bigoplus_{k=0}^\infty (\Omega_{k,e}\oplus \Omega_{k,o}).
$$
For convenience, we use the following notation 
$$
\lla u,v \rra = \int_0^\infty u(r)\bar v(r) \, r \, dr.
$$ 
Then the orthogonality conditions \eqref{E:ortho-v-p} become
\begin{align}
& 
\llangle v , Q \rrangle = 0 \quad \mbox{for} ~ v \in \Omega_{0,e} \quad \mbox{and} \label{E:ortho-v-1}\\
&
\llangle v ,\partial_r Q \rrangle = 0 \quad \mbox{for}~~ v(r) \cos\theta \in \Omega_{1,e}, \label{E:ortho-v-2} \\
&
\llangle v ,\partial_r Q \rrangle = 0 \quad \mbox{for}~~ v(r) \sin\theta \in \Omega_{1,o}.  \notag
\end{align}

In light of Lemma \ref{L:ortho} when $p=3$ we also require (here, $\Lambda Q =  Q + r \partial_r Q$) 
\begin{equation}\label{E:p3}
\llangle v, \Lambda Q \rrangle =0 \quad \mbox{for} \quad v\in \Omega_{0,e}.
\end{equation}

The Laplacian is invariant on each of these subspaces, and the finite rank operator $P$ acts only on $\Omega_{1,e}$.  Specifically,
$$\forall \; k\geq 0 \,, \quad \mathcal{L}\Pi_{k,e} = \Pi_{k,e}\mathcal{L} \quad \text{and} \quad \mathcal{L}\Pi_{k,o} = \Pi_{k,o}\mathcal{L}$$
$$\forall \; k\geq 0 \,, \; P\Pi_{k,o} = 0 \,, \qquad \forall \; k=0,2,3,\ldots \,, \; P\Pi_{k,e} = 0 \,, \qquad P= P\Pi_{1,e} = \Pi_{1,e} P $$
Thus,
$$\mathcal{L}+\mathcal{E}+2P = \sum_{k\geq 0 \,, k\neq 1} \Pi_{k,e}\mathcal{L}\Pi_{k,e} + \sum_{k\geq 0} \Pi_{k,o}\mathcal{L}\Pi_{k,o} +\mathcal{E} + (\Pi_{1,e}\mathcal{L}\Pi_{1,e} + 2\Pi_{1,e}P \Pi_{1,e}). $$
Since $\mathcal{L}$, subject to the orthogonality conditions \eqref{E:ortho-v-p} (and \eqref{E:ortho-p3} when $p=3$), is positive, 
it follows that the first two terms are positive (subject to the orthogonality conditions \eqref{E:ortho-v-1}-\eqref{E:ortho-v-2} and \eqref{E:p3} when $p=3$).  As mentioned above, $\mathcal{E}$ is positive.   It remains to show that 
$$\Pi_{1,e}\mathcal{L}\Pi_{1,e} + 2\Pi_{1,e}P \Pi_{1,e}$$
is positive (subject to the orthogonality conditions). Consider the densely defined\footnote{The operator $U$ is actually defined on the subspace $H^2((0,\infty),rdr) \cap H^1_0((0,\infty),rdr)$ which is dense in $L^2((0,\infty),rdr)$, whereas $V$ is everywhere defined on $L^2((0,\infty),rdr)$.} operators
\begin{equation}\label{Def:U-V}
U,V : L^2( (0,+\infty), rdr) \to L^2( (0,+\infty), rdr)
\end{equation}
by
\begin{equation}\label{E:U-V-def}
(U v )(r) = \frac{\Pi_{1,e} \mathcal{L} \Pi_{1,e} (v(r)\cos\theta)}{\cos \theta} \,, \qquad (V v )(r) = \frac{ 2 \Pi_{1,e} P \Pi_{1,e} (v(r)\cos\theta)}{\cos \theta}.
\end{equation}
It suffices to show that $T:=U+V$ is positive subject to the converted orthogonality condition \eqref{E:ortho-v-2}.
From \eqref{E:U-V-def}, we have
$$U  = -\partial_r^2 - \frac1r\partial_r +\frac{1}{r^2} + 1 - pQ^{p-1}$$
and, recalling the expression for $2P$ from \eqref{E:P}, we also have
\begin{equation}\label{E:Vv}
(Vv)(r) = (p-1) \frac{\lla v, r Q\rra }
{\lla Q, Q \rra} \, (Q^p)_r
+ (p-1) 
\frac{\lla v,  (Q^p)_r \rrangle}{\lla Q,Q \rra}
\, rQ,
\end{equation}
where 
$${\lla  v, r Q\rra } = \int_0^\infty v(r) Q(r) r^2 \,dr \quad 
\mbox{and} \quad 
\lla v,  (Q^p)_r \rra=\int_0^\infty v(r) \partial_r([Q(r)]^p) \,r \, dr.
$$ 
Renormalizing $(Q^p)_r$ and $rQ$ as 
$h_1 = c (Q^p)_r$ and $h_2 = c\, rQ$ with $c = \sqrt{\frac{p-1}{\lla Q,Q\rra} \,} \equiv \sqrt{\frac{2\pi(p-1)}{\|Q\|^2_{L^2({\mathbb R^2})}}}$, we rewrite \eqref{E:Vv} as 
\begin{equation}\label{E:Vv-normalized}
(Vv)(r) = \lla v, h_2\rra  \, h_1 + \lla v, h_1\rra  \, h_2.
\end{equation}

It is convenient for the numerical simulations to reduce the singularity of $U$ near $r=0$ by converting between $U$ and an operator $\hat{U}:  L^2( (0,+\infty), r^3dr) \to L^2( (0,+\infty), r^3dr) $ 
given by
$$\hat{U} = - \partial_r^2 - \frac{3}{r}\partial_r + 1 - pQ^{p-1}.
$$
Direct computation confirms that
$$U = r\, \hat{U}\, r^{-1}.
$$

Similarly, converting $V$ to $\hat{V}: L^2( (0,+\infty), r^3dr) \to L^2( (0,+\infty), r^3dr) $ via $V=r\hat V r^{-1}$ yields
$$
(\hat{V} v)(r) = p(p-1) \frac{\lla  v, r^2 Q\rra }
{\lla Q, Q \rra} \frac{Q^{p-1} Q_r}{r}
+ (p-1) 
\frac{\lla v,  r (Q^p)_r \rrangle}{\lla Q,Q \rra}
\, Q.
$$
In the numerical implementation, it is more convenient to use $T = \hat{U} + \hat{V}$, however, for simplicity of exposition in the paper, we formulate the results for $T = U+V$. 

\section{$f$-$f^*$ Lemma}\label{S:f-fstar}

\newcommand{\ind}{\operatorname{ind}}
\newcommand{\proj}{\operatorname{proj}}

Borrowing language typically applied to quadratic forms, we define the \emph{index} (of negativity) of a Schr\"odinger operator to be the cumulative number of dimensions of the negative eigenspaces, or, the number of negative eigenvalues counted with multiplicities.

The next lemma deals with a general self-adjoint operator on a Hilbert space and some of its spectral properties. Later on it will be applied to $T=U+V$ and checking $\ind T\leq 1$ and the bottom of the positive spectrum is strictly positive.

\begin{lemma}[$f$-$f^*$ Lemma]
\label{L:f-fstar}
Let $H$ be a Hilbert space with an inner product $\la \cdot, \cdot \ra$, $T:H\to H$ is a densely defined self-adjoint operator, and $f\in H$ is given.  
\begin{enumerate}[label=(\alph*),left=0pt]
\item Assume the following:
\begin{itemize}
\item  $\ker T=\{0\}$.
\item $\ind T \leq 1$.
\item There exists $f^*\in H$ such that $Tf^*=f$ satisfying $\la f, f^*\ra <0$.  
\end{itemize}
Then $T$ is positive on $(\spn \{f\})^\perp$.  This means that if $v\neq 0$ satisfies $\la v, f \ra =0$, then $\la Tv,v \ra >0$.    

\item Let $\mu \geq 0$ be the bottom of the nonnegative spectrum of $T$.  If, in addition to the assumptions of (a), we further assume that $\mu>0$, then 
$$\mu^* = \inf \{ \; \la Tv,v \ra \; : \; \|v\|=1 \text{ and } \la v,f \ra =0 \; \}$$
satisfies $\mu^*>0$.
\end{enumerate}
\end{lemma}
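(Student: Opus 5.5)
The argument is the classical one for quadratic forms with one negative direction. First dispose of the case $\ind T=0$: then $T\ge 0$, and since $\ker T=\{0\}$, every $v\ne0$ has $\la Tv,v\ra>0$, because $\la Tv,v\ra=0$ with $T\geq 0$ forces $T^{1/2}v=0$, hence $Tv=0$. Note that this case is in fact incompatible with $\la f,f^*\ra=\la Tf^*,f^*\ra<0$, so the hypotheses force $\ind T=1$. So let $e$ be the unit eigenvector with $Te=-\lambda e$, $\lambda>0$. By the spectral theorem, $T\ge 0$ on $e^\perp$; write $\mu\ge0$ for the bottom of the spectrum of $T|_{e^\perp}$, which is the nonnegative spectrum in (b).

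For (a), I will use the standard Cauchy--Schwarz argument. Take $v\neq0$ in the form domain with $\la v,f\ra=0$, so that $\la Tv,f^*\ra=\la v,Tf^*\ra=\la v,f\ra=0$. Decompose $f^*=ae+g$ and $v=be+h$ with $g,h\perp e$. The hypothesis gives
\[
\la Tf^*,f^*\ra=-\lambda a^2+\la Tg,g\ra<0,
\]
and orthogonality gives $0=\la Tv,f^*\ra=-\lambda ab+\la Th,g\ra$. In particular $a\neq0$. Suppose $\la Tv,v\ra=-\lambda b^2+\la Th,h\ra\le 0$. By Cauchy--Schwarz for the nonnegative form $\la T\cdot,\cdot\ra$ on $e^\perp$,
\[
\lambda^2a^2b^2=\la Th,g\ra^2\le \la Th,h\ra\la Tg,g\ra< \lambda b^2\cdot \lambda a^2
\]
whenever $b\ne0$, which is a contradiction. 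If $b=0$, then $\la Th,h\ra\le0$ forces $Th=0$ by the argument above, so $h=0$ by $\ker T=\{0\}$, contradicting $v\ne0$. Hence $\la Tv,v\ra>0$. The strict inequality above uses $\la Tg,g\ra<\lambda a^2$ together with $\la Th,h\ra\le \lambda b^2$.

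For (b), I will make the argument quantitative rather than use compactness, since $T$ is only densely defined and minimizing sequences need not converge. Normalize $\|v\|=1$ and set $\delta=\lambda a^2-\la Tg,g\ra>0$. On $e^\perp$ we have $\la Th,h\ra\ge\mu\|h\|^2$. I want to show that $\la Tv,v\ra=-\lambda b^2+\la Th,h\ra\ge c>0$. The constraint gives $\lambda |a||b|\le \la Th,h\ra^{1/2}\la Tg,g\ra^{1/2}$, so
\[
\lambda b^2\le \frac{\la Tg,g\ra}{\lambda a^2}\la Th,h\ra=\Big(1-\frac{\delta}{\lambda a^2}\Big)\la Th,h\ra .
\]
Therefore $\la Tv,v\ra\ge \frac{\delta}{\lambda a^2}\la Th,h\ra\ge \frac{\delta\mu}{\lambda a^2}\|h\|^2$. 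It remains to bound $\|h\|$ below: since $1=b^2+\|h\|^2$ and $b^2\le \lambda^{-1}\la Th,h\ra$, I need a lower bound on $\la Th,h\ra$, not just on $\|h\|$. So split into two cases. If $\la Th,h\ra\ge \epsilon$, we are done. Otherwise $b^2\le\epsilon/\lambda$, so $\|h\|^2\ge 1-\epsilon/\lambda$, and then $\la Th,h\ra\ge\mu(1-\epsilon/\lambda)$. Choosing $\epsilon$ small relative to $\mu$ and $\lambda$ gives a uniform lower bound, so $\mu^*\ge \frac{\delta}{\lambda a^2}\min(\epsilon,\mu/2)>0$.

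The main obstacle is the bookkeeping with unbounded $T$: making sure the decompositions stay in the form domain, and that $\la Tv,f^*\ra=\la v,f\ra$ holds for form-domain $v$. This is handled by working with the closed quadratic form of $T$ and noting that $e$, $f^*$ lie in $D(T)$.
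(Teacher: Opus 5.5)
Your proof is correct, but it takes a different route from the paper's in both parts.

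\textbf{Part (a).} The paper does not decompose $f^*$ along $e$ or use Cauchy--Schwarz. Instead it subtracts a multiple of $f^*$ from $v$, setting $w = v - \frac{\langle v,e\rangle}{\langle f^*,e\rangle} f^* \in e^\perp$. Using $Tf^*=f$ and $\langle v,f\rangle=0$, it then obtains the exact identity $\langle Tv,v\rangle = -\frac{|\langle v,e\rangle|^2\langle f,f^*\rangle}{|\langle e,f^*\rangle|^2} + \langle Tw,w\rangle$. This writes $\langle Tv,v\rangle$ as a sum of two nonnegative terms: the first is positive if $\langle v,e\rangle\neq0$, and otherwise $w=v\neq0$ makes the second positive. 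Your contradiction argument via Cauchy--Schwarz for the nonnegative form on $e^\perp$ reaches the same conclusion, but less transparently.

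\textbf{Part (b).} The paper takes a minimizing sequence $v_n$ and splits according to the limit of $\langle v_{n},e\rangle$ along a subsequence. If the limit is $0$, it gets $\mu^*\ge\mu$. If the limit is some $\alpha\neq0$, the identity above gives $\mu^*\ge -|\alpha|^2\langle f,f^*\rangle/|\langle e,f^*\rangle|^2$. This uses only boundedness of the scalars $\langle v_n,e\rangle$, not compactness of the $v_n$, so the concern you raise about minimizing sequences is not an obstruction to that route.

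What your version buys is an explicit constant. Your $\delta=\lambda|a|^2-\langle Tg,g\rangle$ is exactly $-\langle f,f^*\rangle$. Your $\epsilon$-case split can also be collapsed: $\langle Th,h\rangle\ge\mu(1-|b|^2)\ge\mu(1-\lambda^{-1}\langle Th,h\rangle)$ gives $\langle Th,h\rangle\ge\lambda\mu/(\lambda+\mu)$. Hence $\mu^*\ge -\langle f,f^*\rangle\,\mu/\bigl((\lambda+\mu)|\langle e,f^*\rangle|^2\bigr)$.

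Your explicit treatment of the form domain is a point the paper glosses over. If $H$ is complex, read $a^2$ and $ab$ as $|a|^2$ and $b\bar a$; nothing else changes.
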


\begin{remark}
Note that we do need the additional assumption in (b) to obtain the conclusion indicated there.  For example, take $T=-\partial_x^2 - 2 \sech^2 x$, a standard Poeschl-Teller Schr\"odinger operator, with $f=\sech x$.  Then $T$ meets the hypotheses of (a) with $f^*=-\sech x$ but nevertheless $\mu=\mu^*=0$.   Thus, an extra assumption is needed to upgrade from positivity to uniform positivity.
\end{remark}

\begin{proof}
(a) Since $\ind(T) \leq 1$, either $\ind(T)=0$ or $\ind(T)=1$. Since $\la Tf^*,f^*\ra = \la f,f^*\ra <0$, it follows that $\ind(T)\neq 0$, so we must have $\ind(T)=1$.  Thus, there exists $e\in H$, $\|e\|=1$ such that $Te=-\lambda e$, where $\lambda>0$.  Note that $\la f^*, e \ra \neq 0$. Indeed, if $\la f^*, e \ra =0$, then $\la Tf^*,f^* \ra \geq 0$, but $\la Tf^*,f^*\ra = \la f, f^*\ra <0$ by assumption.

Given $v\in H$ such that $\la v,f \ra =0$ and $v\neq 0$, we aim to show that  $\la Tv,v \ra >0$. Let 
$$
w = v - \frac{\la v, e \ra}{\la f^*,e \ra } f^* \in (\spn\{e\})^\perp,
$$
which is well-defined since $\la f^*,e\ra \neq 0$.
Since $Tf^*=f$ and $\la v, f \ra =0$, it follows that
$$
\la Tv,v\ra =  -\frac{|\la v,e\ra|^2\la f,f^* \ra}{|\la e, f^*\ra|^2} + \la Tw,w\ra. 
$$
Since $w\in (\spn\{e\})^\perp$ and $\ker T = \{0\}$, it follows that $\la Tw,w\ra >0$ if $w\neq 0$.  If $\la v, f \ra =0$ and $v\neq 0$, then either 
\begin{enumerate}[label=($\bullet$), left=0pt]
\item $\la v, e \ra \neq 0$, in which case we know the first term is strictly positive and the second term is nonnegative, so $\la Tv,v\ra >0$.
\item $\la v, e \ra =0$, in which case $w=v$, and thus, $w\neq 0$ and $\la Tv,v\ra = \la Tw,w\ra >0$.
\end{enumerate}
This completes the proof of (a).

(b)  As shown above, there is exactly one negative eigenvalue with eigenspace spanned by some $e\in H$.  Also, we have assumed that $\ker T =\{0\}$. Hence $\mu$, defined as the bottom of the nonnegative spectrum of $T$, is, in fact,
$$\mu = \inf \{ \; \la Tv,v \ra \; : \; \|v\|=1 \text{ and } \la v,e \ra =0 \; \}.
$$
Let $\{v_n\}$, with $\|v_n\|=1$, $\la v_n,f\ra =0$, be a minimizing sequence for $\mu^*$, i.e.,
$$\la Tv_n,v_n \ra \to \mu^* \text{ as } n\to \infty$$
Suppose there exists a subsequence $\{ v_{n_k}\}$ of $\{v_n\}$ such that $\la v_{n_k},e \ra \to 0$ as $k\to \infty$.  Let
$$w_k = v_{n_k} - \la v_{n_k},e \ra e \in (\spn\{e\})^\perp.
$$
Then 
$$\la Tw_k,w_k \ra =  \la Tv_{n_k},v_{n_k} \ra + \lambda |\la v_{n_k},e\ra|^2 \to \mu^* \text{as }k\to \infty.$$
However, since $w_k \in (\spn \{e\})^\perp$, for all $k$ we have 
$$\la Tw_k,w_k \ra \geq \mu \|w_k\|^2 \to \mu.$$
Thus, $\mu^* \geq \mu>0$.

Now suppose that there does not exist a subsequence $\{v_{n_k}\}$ of $\{v_n\}$ such that $\la v_{n_k},e \ra \to 0$ as $k\to \infty$.  Since $|\la v_n,e\ra| \leq 1$, there must exist a subsequence $v_{n_k}$ such that $\la v_{n_k},e\ra \to \alpha$ for some $\alpha$, it is just that we now must have $\alpha \neq 0$.  In this case, we return to the end of the proof of (a), and define 
$$w_k = v_{n_k} - \frac{\la v_{n_k}, e \ra}{\la f^*,e \ra } f^* \in (\spn\{e\})^\perp$$
and
$$\la Tv_{n_k},v_{n_k}\ra =  -\frac{|\la v_{n_k},e\ra|^2\la f,f^* \ra}{|\la e, f^*\ra|^2} + \la Tw_k,w_k\ra \geq -\frac{|\la v_{n_k},e\ra|^2\la f,f^* \ra}{|\la e, f^*\ra|^2}.$$
Sending $k\to \infty$, we obtain
$$\mu^* \geq -\frac{|\alpha|^2\la f,f^* \ra}{|\la e, f^*\ra|^2}>0.$$
So in either case, we obtain $\mu^*>0$.
\end{proof}

\begin{lemma}[Construction of $f^*$]\label{L:fstar}
Let $U,V:  L^2( (0,+\infty), rdr) \to L^2( (0,+\infty), rdr) $ be the following self-adjoint Schr\"odinger operator 
$$U = -\partial_r^2 - \frac{1}{r}\partial_r + \frac{1}{r^2} +1 - pQ^{p-1}$$
(on a domain with boundary condition $0$ at $r=0$) and self-adjoint finite rank operator
$$
V v  = \lla v,h_1 \rra h_2 + \lla v,h_2 \rra h_1.
$$

Let $T=U+V$ and let $F  = \ker U= \spn \{f\}$. Suppose that $\{f,h_1,h_2\}$ is a linearly independent set.  Denote
\begin{equation}\label{E:h-tilde}
\tilde h_j = h_j - \frac{\lla h_j,f\rra f}{\lla f,f \rra } = \proj_{F^\perp} h_j \,, \quad j=1,2.
\end{equation}
Let $h_j^*$ be the unique function in $F^\perp$ such that $Uh_j^*=\tilde h_j$, $j=1,2$. 

Let
\begin{equation}\label{E:M}
M = \begin{bmatrix}
\lla Vf,f \rra & \lla \tilde h_1 + V h_1^* ,f \rra & \lla \tilde h_2 + V h_2^*,f \rra\\
\lla Vf, \tilde h_1 \rra & \lla \tilde h_1 + V h_1^* ,\tilde h_1 \rra & \lla \tilde h_2 + V h_2^*, \tilde h_1 \rra\\
\lla Vf, \tilde h_2 \rra & \lla \tilde h_1 + V h_1^* , \tilde h_2 \rra & \lla \tilde h_2 + V h_2^*, \tilde h_2 \rra
\end{bmatrix}.
\end{equation}
Then
\begin{enumerate}[left=0pt,label=(\alph*)]
\item $\ker T = \{0\}$ if and only if $M$ is non-singular.
\item
The equation  $Tf^*=f$ holds if and only if
\begin{equation}
\label{E:fstar}
f^* = \beta f + \gamma_1 h_1^* + \gamma_2 h_2^*,
\end{equation}
where $(\beta,\gamma_1,\gamma_2)$ satisfy
\end{enumerate}

\begin{equation}
\label{E:systemforfstar}
\begin{bmatrix}
\lla f,f \rra\\
0 \\
0 
\end{bmatrix}
=
M
\begin{bmatrix}
\beta\\
\gamma_1\\
\gamma_2
\end{bmatrix}.
\end{equation}

\end{lemma}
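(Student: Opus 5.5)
The plan is to reduce both statements to a finite-dimensional linear system, using the decomposition $H = F \oplus F^\perp$ and the fact that $U$ restricted to $F^\perp$ is invertible onto $F^\perp$. The latter is implicit in the definition of $h_j^*$: $U$ is self-adjoint, $\ker U = F$ is one-dimensional, and $0$ is an isolated eigenvalue below the essential spectrum $[1,\infty)$, so $\operatorname{ran} U = F^\perp$. The key observation is that $Vv \in \spn\{h_1,h_2\}$ for every $v$.

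First consider an arbitrary $g$ and the equation $Tv = g$, i.e.\ $Uv = g - Vv$. Write $Vv = a_1 h_1 + a_2 h_2$ with $a_1 = \lla v,h_2\rra$ and $a_2 = \lla v,h_1\rra$. Projecting onto $F^\perp$ gives $U v = \proj_{F^\perp} g - a_1\tilde h_1 - a_2\tilde h_2$. Projecting onto $F$ gives the solvability condition $\lla g - Vv, f\rra = 0$, since $\lla Uv,f\rra = \lla v,Uf\rra = 0$.

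For $g = f$ we have $\proj_{F^\perp} f = 0$, so $v$ must have the form $\beta f + \gamma_1 h_1^* + \gamma_2 h_2^*$ with $\gamma_j = -a_j$; here $\beta f$ accounts for the kernel. Conversely, I take the ansatz $v = \beta f + \gamma_1 h_1^* + \gamma_2 h_2^*$ with unknown scalars and compute
$$Tv = \beta Vf + \gamma_1(\tilde h_1 + V h_1^*) + \gamma_2(\tilde h_2 + V h_2^*),$$
using $Uf = 0$ and $Uh_j^* = \tilde h_j$. Thus $Tv = g$ reduces to an equation inside the finite-dimensional space $W := \spn\{f, h_1, h_2\} = \spn\{f,\tilde h_1,\tilde h_2\}$. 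The columns of $M$ are exactly the pairings of $Vf$ and $\tilde h_j + Vh_j^*$ against $f,\tilde h_1,\tilde h_2$, so the whole problem becomes a $3\times 3$ linear system.

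The step that needs care is checking that testing against $f,\tilde h_1,\tilde h_2$ is equivalent to equality in $W$, and that every solution lies in the ansatz form.
\begin{itemize}
\item \emph{Testing is faithful.} By the linear independence assumption, $\{f,\tilde h_1,\tilde h_2\}$ is a basis of $W$, so its Gram matrix is invertible. Each column vector $Vf$ and $\tilde h_j + V h_j^*$ lies in $W$, so two elements of $W$ are equal iff their pairings with $f,\tilde h_1,\tilde h_2$ agree. Hence $Tv = f$ iff $M(\beta,\gamma_1,\gamma_2)^T = (\lla f,f\rra, \lla f,\tilde h_1\rra, \lla f,\tilde h_2\rra)^T = (\lla f,f\rra,0,0)^T$, because $\tilde h_j \perp f$.
\item \emph{Every solution has the ansatz form.} If $Tv = f$, then $Uv = f - Vv$ lies in $W$ and in $\operatorname{ran}U = F^\perp$. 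So $Uv = -\lla v,h_2\rra \tilde h_1 - \lla v,h_1\rra \tilde h_2$ (the components along $f$ cancel). Since $U$ is injective on $F^\perp$, it follows that $v - \proj_F v = -\lla v,h_2\rra h_1^* - \lla v,h_1\rra h_2^*$.
\end{itemize}
This proves (b).

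For (a), apply the same argument with $g = 0$: $v \in \ker T$ iff $v = \beta f + \gamma_1 h_1^* + \gamma_2 h_2^*$ with $M(\beta,\gamma_1,\gamma_2)^T = 0$. It remains to show that the map $(\beta,\gamma_1,\gamma_2) \mapsto \beta f + \gamma_1 h_1^* + \gamma_2 h_2^*$ is injective, so that nontrivial kernel vectors of $M$ give nonzero $v$. Suppose $\beta f + \gamma_1 h_1^* + \gamma_2 h_2^* = 0$. Applying $U$ gives $\gamma_1\tilde h_1 + \gamma_2\tilde h_2 = 0$, so $\gamma_1 = \gamma_2 = 0$ by independence, and then $\beta = 0$. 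Therefore $\ker T \cong \ker M$, and $\ker T = \{0\}$ iff $M$ is non-singular.
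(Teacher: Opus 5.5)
Your proof is correct and follows the same route as the paper. Projecting $Tv=f$ (or $Tv=0$) onto $F^\perp$ forces the form $v=\beta f+\gamma_1h_1^*+\gamma_2h_2^*$; substituting with $Uf=0$ and $Uh_j^*=\tilde h_j$, then testing against the basis $\{f,\tilde h_1,\tilde h_2\}$ of $\spn\{f,h_1,h_2\}$, yields the $3\times 3$ system. Your added checks (faithfulness of testing, and injectivity of $(\beta,\gamma_1,\gamma_2)\mapsto \beta f+\gamma_1h_1^*+\gamma_2h_2^*$, which gives $\ker T\cong\ker M$ and hence both directions of (a)) fill in what the paper's one-line argument for (a) leaves implicit.
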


\begin{remark}
We emphasize that from Lemma \ref{L:fstar} and on we only use the one dimensional $\lla \cdot,\cdot \rra$ as the inner product on the $L^2((0, +\infty), r dr)$ space. 
\end{remark}

\begin{proof}
Since $U$ is self-adjoint, the range of $U$ is exactly $F^\perp$, and hence, $\proj_{F^\perp} U = U$. Note also that $\proj_{F^\perp} f = 0$.    

To prove the part (b), we suppose that $f^*$ is given and the equation $(U+V)f^* = f$ holds.  Take $\proj_{F^\perp}$ of both sides to obtain
$$
Uf^*= -\proj_{F^\perp} Vf^* \in \spn \{ \tilde h_1, \tilde h_2\}.
$$
Thus, there exists $\beta, \gamma_1, \gamma_2 \in \mathbb{C}$ such that 
$$
f^* = \beta f + \gamma_1 h_1^* + \gamma_2 h_2^*, 
$$
which implies both
$$
Uf^* = \gamma_1 \tilde h_1 + \gamma_2 \tilde h_2
$$
and
$$
Vf^* = \beta Vf + \gamma_1 Vh_1^* + \gamma_2 Vh_2^*.
$$
Substituting, the equation $(U+V)f^* = f$ becomes
$$
\beta Vf + \gamma_1(\tilde h_1 + Vh_1^*) + \gamma_2(\tilde h_2 + Vh_2^*) = f.
$$
Every term in this equation belongs to $\spn\{f,\tilde h_1, \tilde h_2\}$, and thus the equation is equivalent to the collection of three equations relating the coefficients with respect to this basis. These equations can be derived by taking the inner product with each of $f, \tilde h_1$, $\tilde h_2$, which yields \eqref{E:systemforfstar}.

To show part (a), we proceed in the same manner as above, replacing $f$ with $0$ in \eqref{E:systemforfstar}, which shows that $M$ must be non-singular.  
\end{proof}

\begin{corollary}\label{C:main}
In order to apply Lemma \ref{L:f-fstar} to $T=U+V$, we need only check numerically that (in the notation of Lemma \ref{L:fstar})
\begin{enumerate}[left=0pt,label=(\alph*)]
\item $\ind V=1$,
\item $M$ is invertible, and
\item when $\beta$, $\gamma_1$, $\gamma_2$ are obtained by solving \eqref{E:systemforfstar}, then $f^*=\beta f + \gamma_1 h_1^* + \gamma_2 h_2^*$ satisfies $\lla f^*,f \rra <0$.
\end{enumerate}
\end{corollary}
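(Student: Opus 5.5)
The plan is to verify, one at a time, the hypotheses of Lemma \ref{L:f-fstar} for $T=U+V$ on $H=L^2((0,\infty),rdr)$. Throughout, $f$ is the generator of $\ker U$. Since $\mathcal L Q_x=0$ and $Q_x=\partial_rQ\,\cos\theta$, we have $U(\partial_rQ)=0$, so $f=\partial_rQ$. This matches the converted orthogonality condition \eqref{E:ortho-v-2}, so the conclusion of Lemma \ref{L:f-fstar} is exactly the positivity needed on $\Omega_{1,e}$.

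\emph{Step 1: $\ind T\le 1$ from (a).} I read item (a) as the statement that the rank-two form $V$ has exactly one negative eigenvalue. For linearly independent $h_1,h_2$ this also follows directly: on $\spn\{h_1,h_2\}$ the form $2\,\mathrm{Re}\,\lla v,h_1\rra\overline{\lla v,h_2\rra}$ has signature $(1,1)$. Next, $U\ge 0$: the function $\partial_rQ<0$ for $r>0$ does not change sign, so it is the ground state of the radial Sturm--Liouville operator $U$ and $0=\inf\sigma(U)$. Equivalently, $\mathcal L\ge0$ on the first angular mode. Then min--max gives $\ind(U+V)\le \ind U+\ind V = 0+1 = 1$. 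Concretely, if $T$ were negative on a two-dimensional subspace, that subspace would meet the orthogonal complement of the one-dimensional negative eigenspace of $V$ nontrivially, and on that complement $U+V\ge 0$, a contradiction.

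\emph{Step 2: $\ker T=\{0\}$ and existence of $f^*$ from (b) and (c).} By Lemma \ref{L:fstar}(a), invertibility of $M$ is equivalent to $\ker T=\{0\}$. Given $M$ invertible, the system \eqref{E:systemforfstar} has a unique solution $(\beta,\gamma_1,\gamma_2)$. By Lemma \ref{L:fstar}(b), the resulting $f^*=\beta f+\gamma_1h_1^*+\gamma_2h_2^*$ satisfies $Tf^*=f$, and check (c) supplies $\lla f,f^*\rra<0$. All hypotheses of Lemma \ref{L:f-fstar}(a) then hold, so $T$ is positive on $\{f\}^\perp$.

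\emph{Step 3: uniform positivity, for Lemma \ref{L:f-fstar}(b).} I must show that the bottom $\mu$ of the nonnegative spectrum is strictly positive. Since $Q^{p-1}$ decays exponentially by \eqref{prop-Q} and $V$ has finite rank, Weyl's theorem gives $\sigma_{ess}(T)=\sigma_{ess}(-\partial_r^2-\frac1r\partial_r+\frac1{r^2}+1)=[1,\infty)$. Hence $\sigma(T)\cap(-\infty,1)$ consists of isolated eigenvalues of finite multiplicity. As $0$ is not an eigenvalue by Step 2, $0$ is not in the spectrum, and therefore $\mu>0$. Lemma \ref{L:f-fstar}(b) then yields $\mu^*>0$, which is the coercivity of $\Pi_{1,e}\mathcal L\Pi_{1,e}+2\Pi_{1,e}P\Pi_{1,e}$ under \eqref{E:ortho-v-2}.

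The main obstacle I expect is the justification in Step 1, namely $U\ge0$ with kernel exactly $\spn\{\partial_rQ\}$. This needs the Sturm oscillation and ground-state argument for the singular radial operator with the Dirichlet-type condition at $r=0$; $\partial_rQ$ vanishes linearly at the origin, consistent with the $\frac1{r^2}$ term. I would instead quote the known fact $\ker\mathcal L=\spn\{\nabla Q\}$ together with the nondegeneracy and positivity results cited in Lemma \ref{L:ortho} from \cite{W85,FHR3}. A secondary point is keeping track of domains, so that $h_j^*\in F^\perp$ exists by Fredholm theory for $U$; this holds because $\tilde h_j\perp f$ and $0$ is isolated in $\sigma(U)$.
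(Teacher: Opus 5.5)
Your proposal is correct and follows the paper's proof. The paper also uses Sturm oscillation to make $\partial_r Q$ the ground state of $U$, giving $\ind U=0$ and $\ind T\le \ind U+\ind V\le 1$, and uses Weyl's criterion for the gap; like you, it relies on checks (b) and (c) together with Lemma~\ref{L:fstar} to get $\ker T=\{0\}$ and $f^*$ (you state that step explicitly, the paper leaves it implicit).

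Two of your refinements are worth keeping. First, applying Weyl's theorem to $T$ itself (a finite-rank perturbation of $U$) and combining it with $\ker T=\{0\}$ is exactly what gives $\mu>0$; the paper only states the gap for $U$. Second, your observation that $\ind V=1$ holds automatically once $h_1,h_2$ are linearly independent shows that check (a) is in fact redundant.
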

\begin{proof}
We know that $\ker U = \spn \{f\}$, where $f=Q_r$.  Since $Q_r$ has no zeros in $(0,+\infty)$, it is the ground state of $U$, by the Sturm oscillation theorem.  In other words, $\sigma(U) \subset [0,+\infty)$.    Thus, $\ind U=0$ and $\ind T \leq \ind U + \ind V\leq 1$.

Moreover, by the Weyl criterion, the essential spectrum of $U$ is $[1,+\infty)$.  Since the potential of $U$ is rapidly decaying, there are only a finite number of eigenvalues away from the essential spectrum, and these (if there are any) must lie in $(0,1)$.  Thus, there is a gap between $0$ and the next positive eigenvalue.

Thus, all hypotheses of Lemma \ref{L:f-fstar} are confirmed.
\end{proof}

\section{Benchmarks testing}\label{S:test}

To verify hypotheses of Corollary \ref{C:main} we first find the ground state $Q$ either by a shooting method or Newton's iterations, which is standard and gives sufficient accuracy (we emphasize that a high accuracy is not needed in this work, as we only need to verify the signs of several quantities, and in fact, we want to show that even with minimal computational efforts, one can do this numerically-assisted proof). 
\begin{figure}[!ht]
\includegraphics[width=0.8\hsize,height=0.5\hsize]{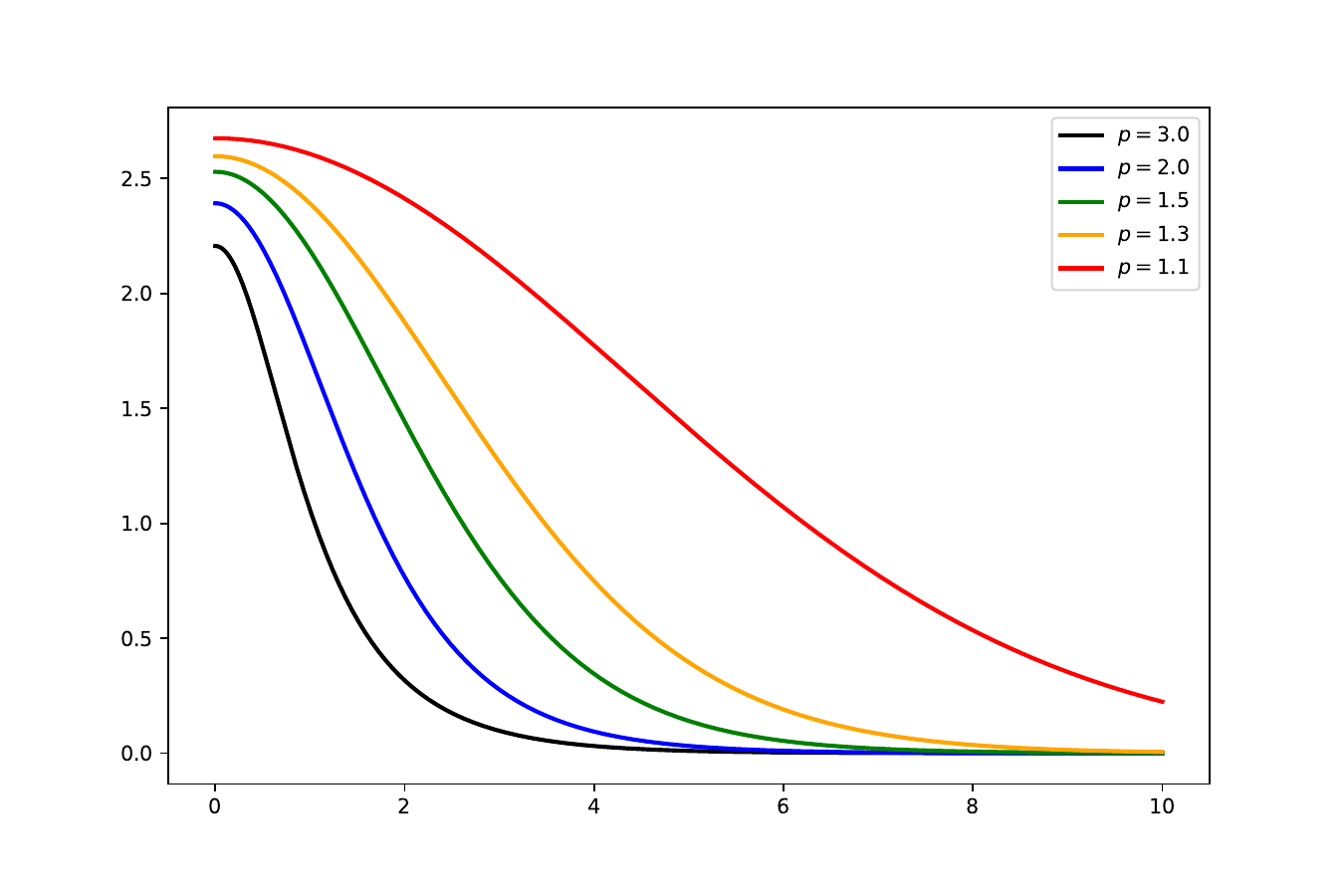}
\caption{2D ground state $Q$ for \eqref{E:Q} for different values of nonlinearity $p$.} 
\label{F:Q3}
\end{figure}
 
We show plots of the 2D (radial) ground state solutions $Q$ of \eqref{E:Q} in Figure \ref{F:Q3} for $p=2$ (left) and $p=3$ (right). 
For these computations, we choose a large enough domain, so that the boundary values of $Q$ would be on the order of at least $10^{-9}$, the 4th order (via shooting) or 2nd order (via Newton's) coupled with the Taylor expansion to the 4th order near zero, with a step size on the order of $0.001$.   
\bigskip

We also perform check of Pokhozhaev identities in \eqref{P:1}-\eqref{P:2} and \eqref{LambdaQ} (in the whole 2D space with the usual inner product $\la \cdot, \cdot \ra$), and confirm that the matching holds to a sufficient order, see Table \ref{T:1}.
{\small
\begin{table}[h!]
\centering
\begin{tabular}{|c|c|c|c|c|c|}
\hline
\textbf{Quantities} & $p = 3$ & $p = 2$ & $p=1.5$ & $p=1.3$ & $p=1.1$ \\
\hline
\hline
$\|\nabla Q\|^2_{L^2}$ & 11.7009 & 15.5016 &  18.5759 & 20.1870 & 22.1081\\
\hline
$\| Q \|^2_{L^2}$ & 11.7009 &  31.0032 &  74.3037 & 134.5797 & 442.1614 \\
\hline
$\|\nabla Q\|^2_{L^2} - \frac{p-1}2 \| Q\|^2_{L^2}$ & $\sim 10^{-9}$ & $\sim 10^{-8}$ & $\sim 10^{-9}$ & $\sim 10^{-9}$ & $\sim 10^{-9}$
\\
\hline
$\| Q\|^{p+1}_{L^{p+1}}$ & 23.4018 & 46.5048 & 92.8797 & 154.7666 & 464.2694\\
\hline
$\| Q\|^{p+1}_{L^{p+1}} - \frac{p+1}2 \| Q\|^2_{L^2}$ & $\sim 10^{-10}$  & $\sim 10^{-8}$ & $\sim 10^{-11}$ & $\sim 10^{-9}$ & $\sim 10^{-9}$
\\
\hline
$\la \Lambda Q, Q \ra$  & $\sim 10^{-12}$ &   
31.0032 & 222.91 & 762.62 &  8401.07
\\
\hline
$\la \Lambda Q, Q \ra -  \frac{3-p}{p-1} \, \| Q\|^2_{L^2}$ & $\sim 10^{-12}$ & $\sim 10^{-12}$ & $\sim 10^{-13}$ & $\sim 10^{-12}$ & $\sim 10^{-12}$
\\
\hline
\end{tabular}\smallskip
\caption{Pohozhaev quantities \eqref{P:1}-\eqref{P:2}-\eqref{LambdaQ} 
for different values of $p$.}
\label{tab:quantities}
\end{table}
}

Next we compute operators $U$ and $V$, recalling that now we only deal with the one dimensional inner product $\lla \cdot,\cdot \rra$.  
For the operator $U$ (or $\hat U$) we use a finite difference method, and for the operator $V$ (or $\hat V$) we use Simpson's rule.
To test our code, we recall $h_1$ and $h_2$ functions from \eqref{E:Vv-normalized} (and the sentence before that) and define their images under the operator $U$, to check that the numerical values for the functions $U(h_j)$, $j=1,2$, coincide with the exact values given by the formulas (recall $c = \sqrt{\frac{p-1}{\lla Q,Q \rra } \, }$  ):
\begin{align}
& \qquad h_1  = c \,(Q^p)_r \equiv c \, p \, Q^{p-1} Q_r, \notag \\ 
& U(h_1) = -p(p-1)\, c\,  \Big[ (p-2)Q^{p-3}Q^3_r +3 Q^{p-1}Q_r- 3 Q^{2(p-1)} Q_r - \frac{2}{r} Q^{p-2}Q_r^2 \Big] = :w_1, \label{Uh1}\\
& \qquad h_2  = c \, r Q, \notag \\
& U(h_2) = -2 c\, Q_r - (p-1) c\, r Q^p = :w_2. \label{Uh2}
\end{align}

In Figure \ref{F:directU} we show the exact and computed values of functions $U(h_1)$ and $U(h_2)$ for $p=3$. 
\begin{figure}[ht!]
\noindent\includegraphics[width=1.0\hsize,height=0.4\hsize]
{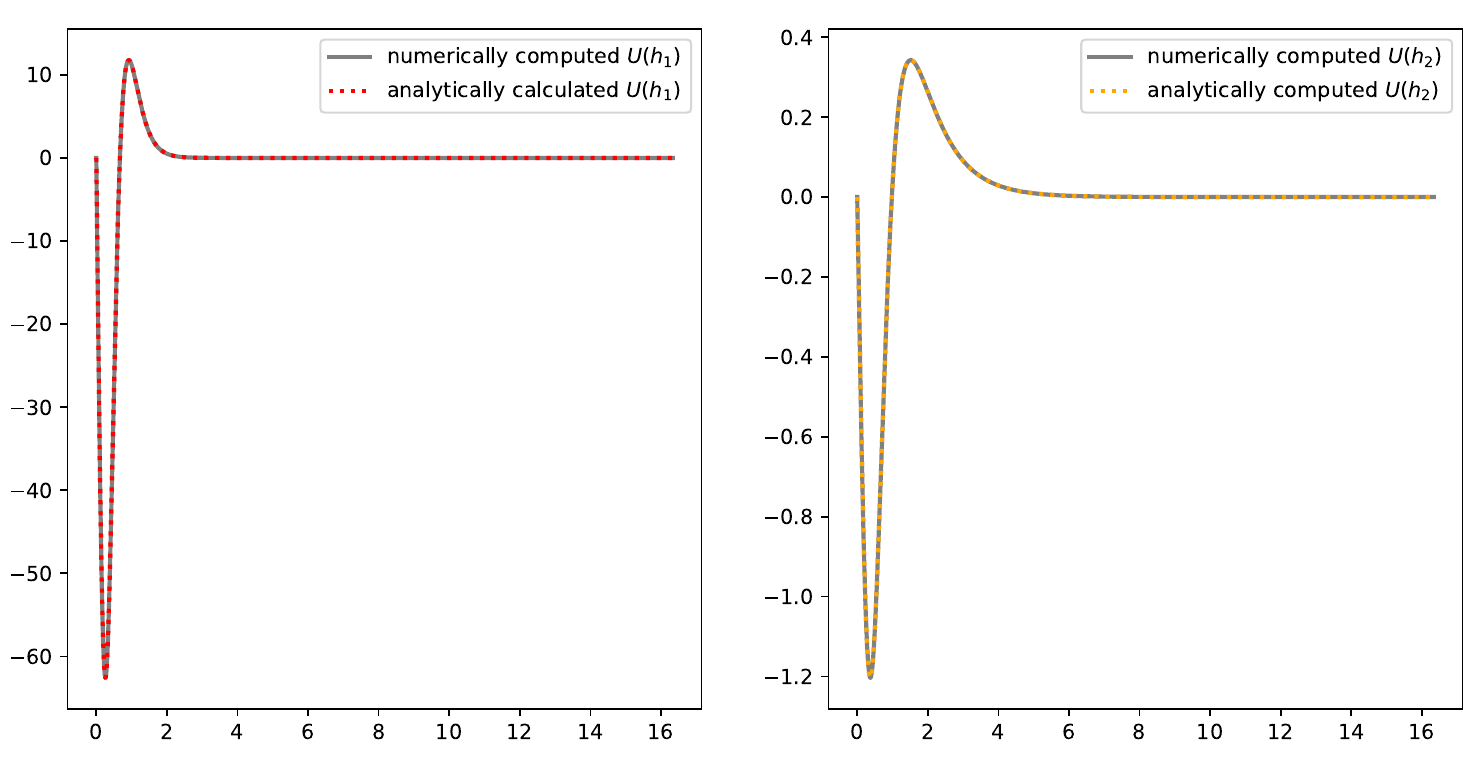}
\caption{Exact and computed functions $U(h_1)$ (left) and $U(h_2)$ (right), $p=3$.} \label{F:directU}
\end{figure}

We then test the inverse operator $U^{-1}$, by considering the functions $w_1$ and $w_2$ from \eqref{Uh1} and \eqref{Uh2}, and comparing $U^{-1}(w_1)$ with the projection onto $F^\perp$ of $h_1$, which is denoted by $\tilde h_1$ in \eqref{E:h-tilde} in the left of Figure \ref{F:inverseU} and $U^{-1}(w_2)$ with the projection onto $F^\perp$ of $h_2$, called $\tilde h_2$ in \eqref{E:h-tilde}, on the right of the same figure.  In both Figures \ref{F:directU} and \ref{F:inverseU} one can see the 
matching of the exact and numerical values. 

\begin{figure}[ht!]
\includegraphics[width=1.1\hsize,height=0.42\hsize]{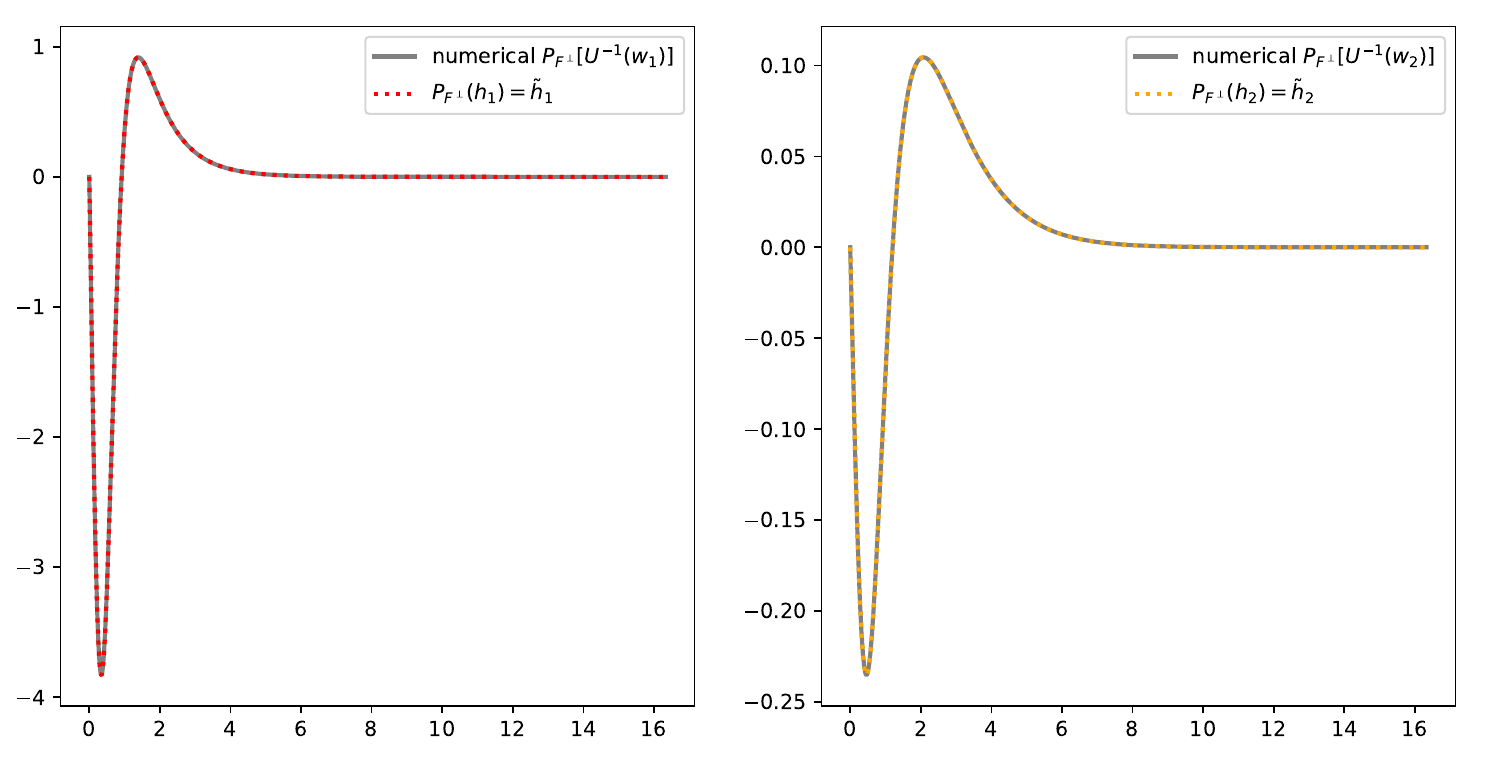}
\caption{Exact and computed 
functions $U^{-1}(w_1)$ (left) and $U^{-1}(w_2)$ (right), $p=3$.} \label{F:inverseU}
\end{figure}

We test other values of $p$ between 1 and 3, to be precise,  $p \in [1.1,3]$, we obtain similar results.

\section{Results: verification of hypotheses of Cor. \ref{C:main}}\label{S:results}

We first discuss the 2d cubic (critical) ZK equation and verification of items (a)-(c) of Corollary \ref{C:main}. 
Afterward, in Section \S \ref{S:R2}, we provide in Table \ref{T:1} the list of values for the other values of $p$. 

\subsection{2d critical ZK ($p=3$)}
For part (a) we check the index of $V$, for that, we compute the matrix 
\begin{equation}\label{E:B-matrix}
B = 
\begin{bmatrix}
\lla Vh_1,h_1 \rra & \lla V h_2,h_1 \rra \\
\lla V h_1, h_2 \rra & \lla V h_2, h_2 \rra 
\end{bmatrix},
\end{equation}
which in this case of $p=3$ is given by
$$
B
= \begin{bmatrix}
-16.8353 & 4.2439 \\
 4.2439 & -0.7219
\end{bmatrix},
$$
and it has the following eigenvalues
$$ 
(\lambda_1, \lambda_2) = (-17.885, 0.327).
$$
As it has one negative and one positive eigenvalue, we easily conclude that $\ind V = 1$, and hence, the part (a) is verified. 

For part (b), we compute the matrix $M$ in \eqref{E:M}
and its determinant, 
\begin{equation}\label{M:p3}
M = \begin{bmatrix}
-4.831 & -0.218 & -0.006 \\
-1.644 & 4.314 & 0.504 \\
-0.076 & 0.441 & 0.079
\end{bmatrix},
\qquad 
\mbox{det}\, M =  -0.59 < 0,
\end{equation}
and therefore, the matrix $M$ is non-singular, thus verifying the item (b) of Corollary \ref{C:main}. 

\begin{figure}[ht!]
\includegraphics[width=0.5\hsize,height=0.42\hsize]{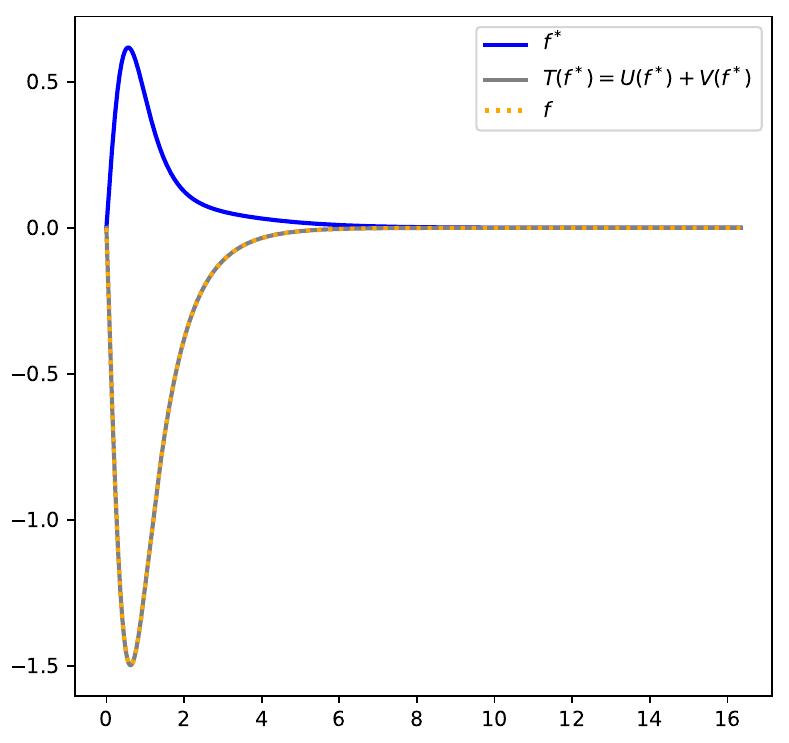}
\caption{The function $f^\ast$ and comparison of $T(f^\ast)$ with $f$ in Lemma \ref{L:fstar}.} \label{F:fstar}
\end{figure}
We next obtain the coefficients $\beta, \gamma_1, \gamma_2$ for the function $f^\ast = \beta f + \gamma_1h_1 + \gamma_2 h_2$:
$$
\beta = -0.374,  \quad \gamma_1 = -0.288,  \quad \gamma_2 = 1.245,
$$
and plot this function as well as comparison of $T(f^\ast)$ with $f$ in Figure \ref{F:fstar}.

Finally, we check the sign of $\lla f,f^\ast\rra$; 
in particular, normalizing both $f$ and $f^\ast$ allows us to check the angle between $f$ and $f^\ast$:
\begin{equation}\label{E:angle}
\cos ~\mbox{of angle }(f,f^\ast) = \frac{\lla f,f^\ast\rra}{\lla f,f \rra^{1/2} \lla f^\ast,f^\ast\rra^{1/2}} = -0.9893
<0,
\end{equation}
which confirms the last item (c) of Corollary \ref{C:main}; thus, concluding that the spectral property for the virial operator holds true in $p=3$ case.

\subsection{2d subcritical ZK case ($1 < p < 3$)}\label{S:R2}
Here, we consider the values of nonlinearity in \eqref{E:Q} $1.1 \leq p \leq 3$ and verify conditions of Corollary \ref{C:main}, which are listed in Table \ref{T:1}. 

{ \footnotesize 
\begin{table}[h!]
\centering
\renewcommand{\arraystretch}{1.2}
\begin{tabular}{|c|c|c|c|c|}
\hline
power {\boldmath$p$}  & \multicolumn{2}{|c|}{Eigenvalues of $B$  
} & {$\det M_p$} & $\cos$ $\mbox{of angle}$  $(f, f^\ast)$\\
\cline{2-3} 
& $\lambda_1$ & $\lambda_2$ & &  
\\
\hline \hline
3.0 & -17.885 & 0.327 & -0.593 & -0.9893 \\
2.9 & -13.019 & 0.250 & -0.345 & -0.9920 \\
2.8 & -9.396 & 0.189 & -0.1967 & -0.9940 \\
2.7 & -6.725 & 0.140 & -0.1094 & -0.9955 \\
2.6 & -4.772 & 0.102 & -0.0592 & -0.9965 \\
2.5 & -3.357 & 0.072 & -0.0310 & -0.9972 \\
2.4 & -2.343 & 0.05  & -0.0157 & -0.9978 \\
2.3 & -1.625 & 0.033  & -0.0076  & -0.9982 \\
2.2 & -1.120 & 0.021   & -0.0035  & -0.9985 \\
2.1 & -0.771 & 0.013  & -0.0015  & -0.9988 \\
2.0 & -0.530 & 0.007 & -0.000613  & -0.9990 \\
1.9 & -0.366 & 0.004  & -0.000227  & -0.9992 \\
1.8 & -0.255 & 0.002  & -7.53 $\times 10^{-5}$  & -0.99936 \\
1.7 & -0.179 & 0.001  & -2.18 $\times10^{-5}$  & -0.99951 \\
1.6 & -0.126  & 0.00025  & -5.25 $\times10^{-6}$  & -0.99965 \\
1.5 & -8.94 $\times 10^{-2}$ & 6.34 $\times10^{-5}$  & -9.84 $\times 10^{-7}$   & -0.99976 \\
1.4 & -6.21 $\times 10^{-2}$ & 1.25 $\times 10^{-5}$ & -1.28 $\times 10^{-7}$  & -0.999853 \\
1.3 & -4.13 $\times 10^{-2}$  & 1.53 $\times 10^{-6}$  & -9.37 $\times 10^{-9}$   & -0.999921 \\
1.2 & -2.47 $\times 10^{-2}$ & 8.10 $\times 10^{-8}$ & -2.38 $\times 10^{-10}$  & -0.999967 \\
1.1 & -1.12 $\times 10^{-2}$ & 5.65  $\times 10^{-10}$ & -4.55 $\times 10^{-13} $ & -0.9999923 \\
\hline
\end{tabular}
\caption{\small {Nonlinearity power $p$ (decreasing from 3 down to 1.1), eigenvalues of $B$ from \eqref{E:B-matrix}, determinant of the matrix $M$ in \eqref{E:M}, and the normalized inner product or
cosine of the angle between $f$ and $f^\ast$ (to verify the sign) from \eqref{E:angle}.}}
\label{T:1}
\end{table}
}

\vspace{-1cm}
\begin{remark}
While the precision of the determinant of the matrix $M_p$ for $p=1.1$ looks very small, nearly machine precision ($- 4.55 \times 10^{-13}$), this is a non-zero value, as the 3 $\times$ 3 determinant is computed by multiplying 3 values of the order $10^{-5}$, namely, 
$$
M_{1.1} = 
\begin{bmatrix}
-1.234 \times 10^{-1} & -1.528 \times 10^{-5} & -9.741 \times 10^{-5} \\
-2.977 \times 10^{-6} &  4.330 \times 10^{-6} & 2.753 \times 10^{-5} \\
-1.888 \times 10^{-5} & 2.753 \times 10^{-5} & 1.759 \times 10^{-4}
\end{bmatrix}.
$$
\end{remark}

Observe that the values in Table \ref{T:1} confirm that one of the eigenvalues of the matrix $B$ (as in \eqref{E:B-matrix}) is positive and another one is negative, which we also plot for a visualization in Figure \ref{F:eig-B}, thus, confirming the $\ind V = 1$ for all $1.1 \leq p \leq 3$. 

\begin{figure}[ht!]
\includegraphics[width=0.49\hsize,height=0.35\hsize]{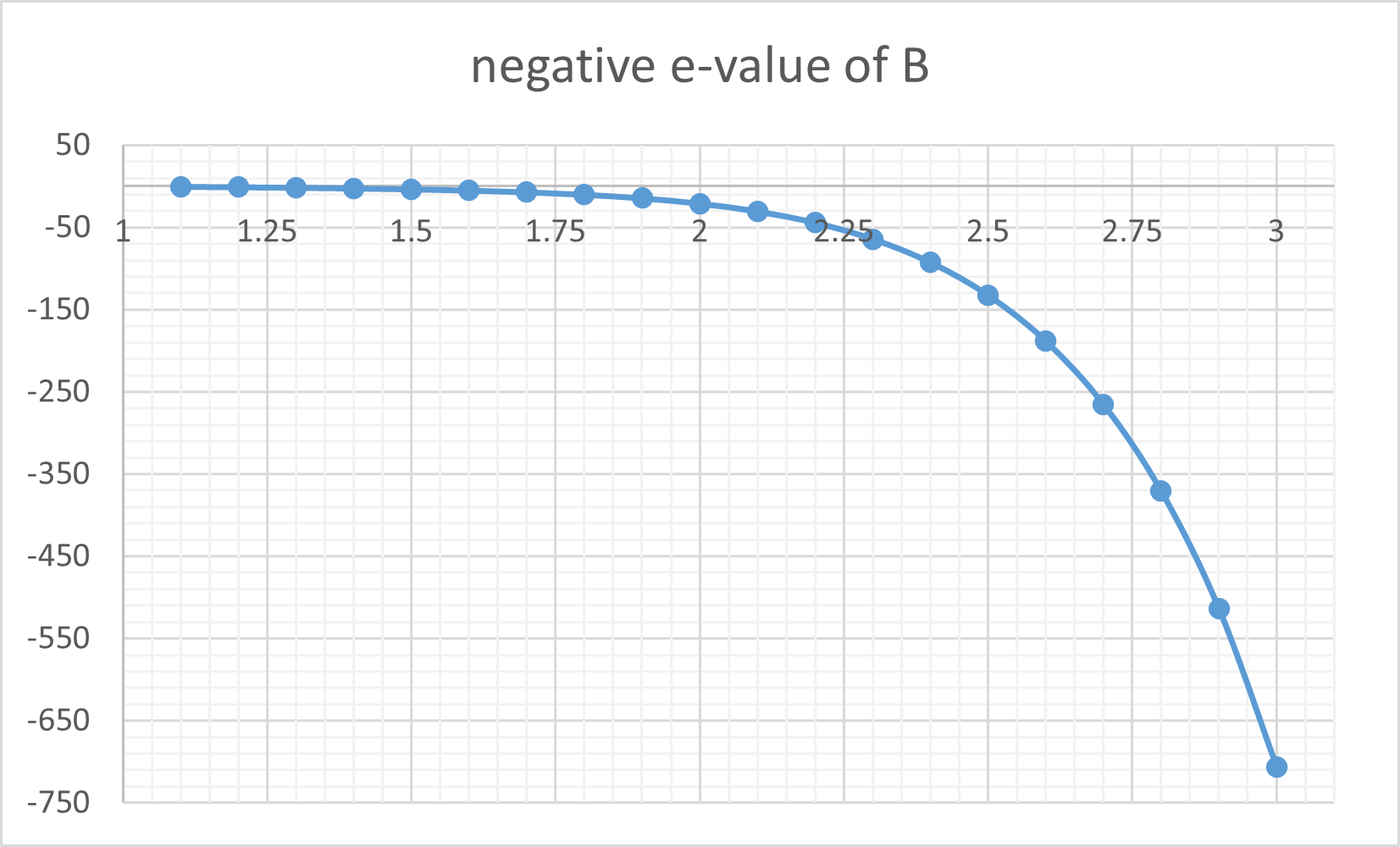}
\includegraphics[width=0.49\hsize,height=0.35\hsize]{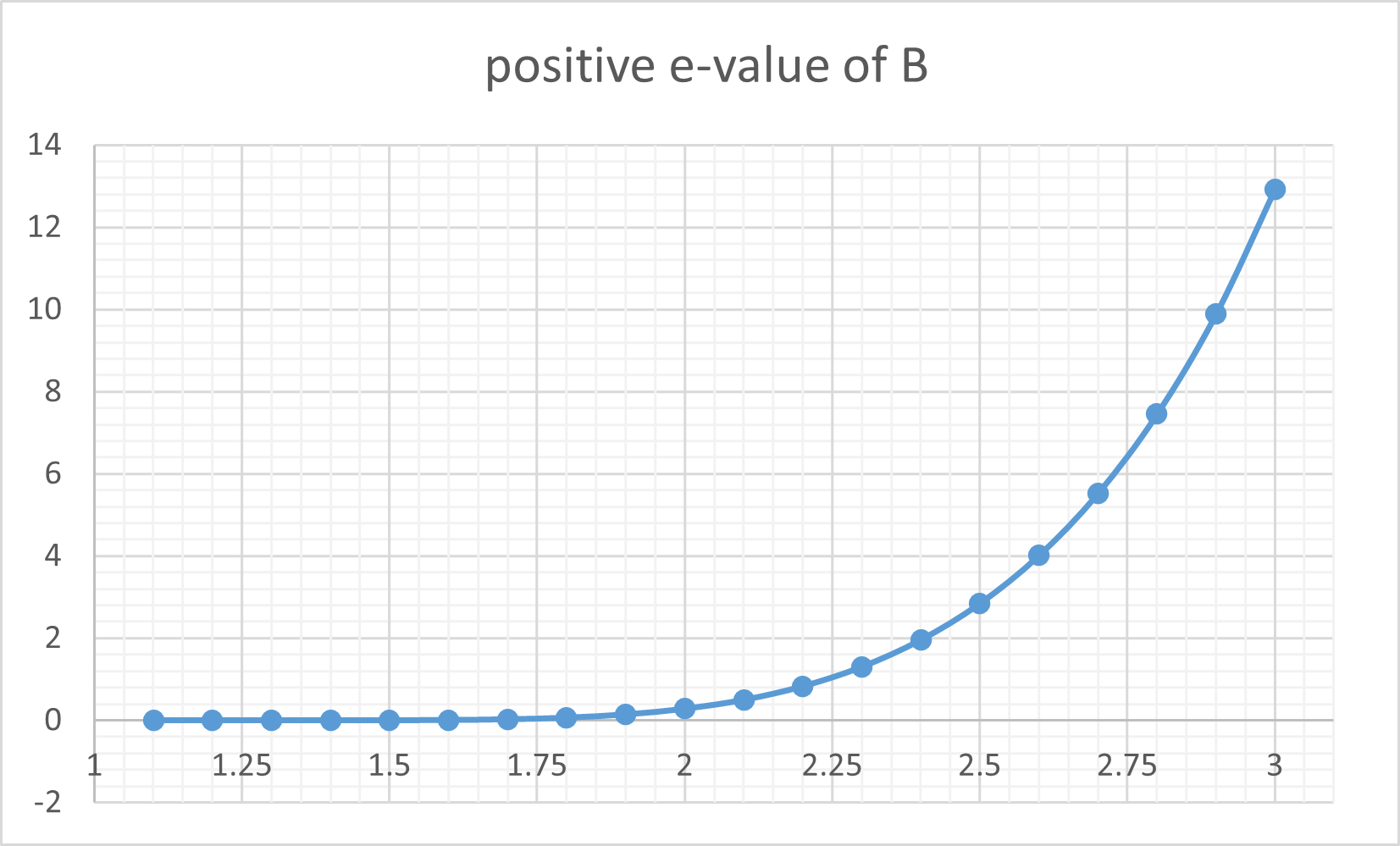}
\caption{\small Eigenvalues of matrix B from Table \ref{T:1}: 
negative (left), positive (right) for $1.1 \leq p\leq 3$.} 
\label{F:eig-B}
\end{figure}

In Figure \ref{F:angle} we show the graph of the determinant $M_p$'s values from Table \ref{T:1} and how the normalized inner product of $f$ and $f^\ast$ (or $\cos$ of the angle between $f$ and $f^\ast$) changes for the same range of $p$. 

\begin{figure}[ht!]
\includegraphics[width=0.49\hsize,height=0.35\hsize]{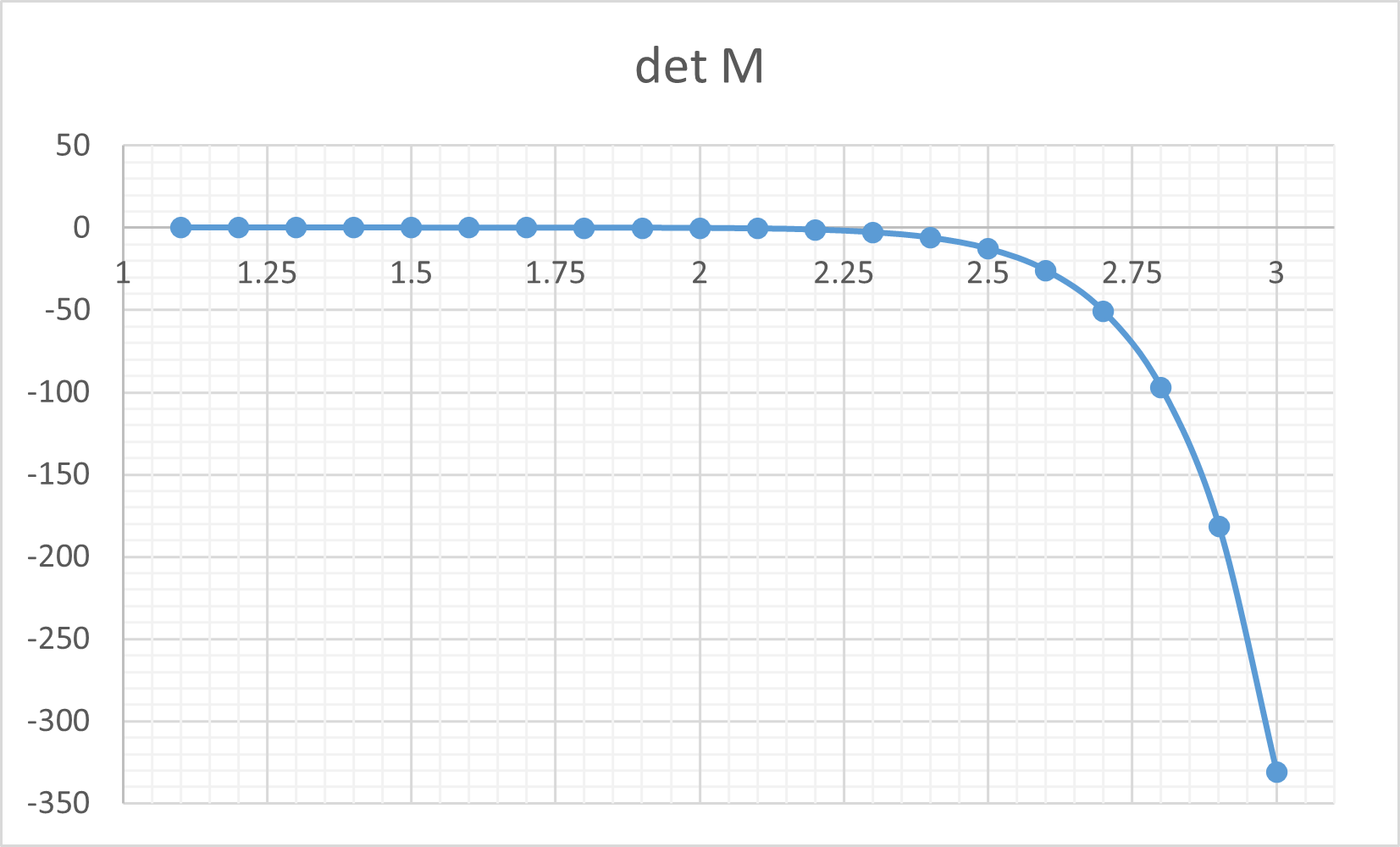}
\includegraphics[width=0.49\hsize,height=0.35\hsize]{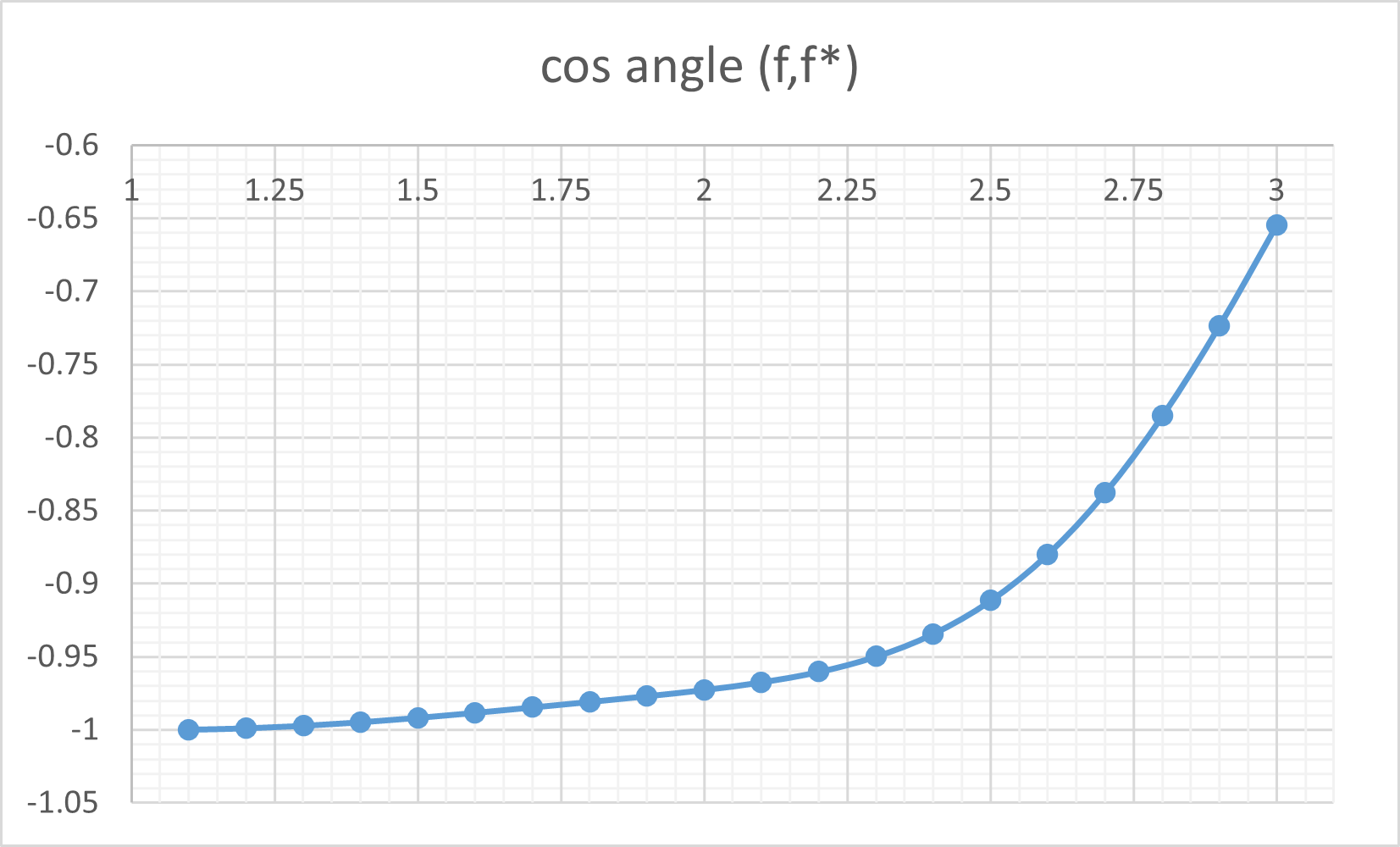}
\caption{\small det $M_p$ and  cosine of the angle between $f$ and $f^\ast$ from Table \ref{T:1}
for $1.1 \leq p\leq 3$.} 
\label{F:angle}
\end{figure}

\bigskip

\bigskip

Note that we have verified that the spectral property of the virial operator holds true for any subcritical and critical cases of $1.1 \leq p \leq 3$. 
For values of $p$ as it approaches 1 ($p \to 1$), we investigate this problem elsewhere; while we believe the results will be similar, it requires more careful numerical investigation as well as the computational power. 
We conclude with mentioning that repeating our approach from \cite{FHRY} and \cite{FHRY2} completes the asymptotic stability of solitary waves in the range of $1.1 \leq p <3$; the conjecture is that the asymptotic stability holds for the entire subcritical range $1<p <3$.

\bigskip

{\bf Conflict of Interest:} The authors declare that they have no conflicts of interest.

\bibliographystyle{abbrv}

\end{document}